\newtheorem{theorem}{Theorem}[section]
\newtheorem{lemma}[theorem]{Lemma}
\newtheorem{remark}[theorem]{Remark}
\theoremstyle{plain}
\newcommand{\cF}{\mathcal{F}}
\newcommand{\cG}{\mathcal{G}}
\newcommand{\cH}{\mathcal{H}}
\newcommand{\cU}{\mathcal{U}}
\newcommand{\cN}{\mathcal{N}}
\newcommand{\HH}{\mathbb{H}}
\newcommand{\FF}{\mathbb{F}}
\newcommand{\RR}{\mathbb{R}}
\newcommand{\LL}{\mathbb{L}}
\newcommand{\cO}{\mathcal{O}}
\numberwithin{equation}{section}
\begin{document}
	\title{Linear-quadratic control  for mean-field  backward stochastic differential  equations with random  coefficients  }
	
	\author{Jie Xiong%
	\thanks{Department of Mathematics and SUSTech International center for Mathematics, Southern University
		of Science and Technology, Shenzhen, 518055, P. R. China. Email: xiongj@sustech.edu.cn.}
	\and Wen Xu%
	\thanks{Department of Mathematics, Southern university of Science and Technology, Shenzhen, 518055, P. R. China. Email:12231279@mail.sustech.edu.cn}
	\and Ying Yang%
	\thanks{Department of Mathematics, Southern University
		of Science and Technology, Shenzhen, 518055, P. R. China. Email:12331007@mail.sustech.edu.cn}
}
	\date{}
	\maketitle
	\begin{abstract}
		In this paper, we study the linear-quadratic control problem for mean-field  backward stochastic differential  equations (MF-BSDE) with random  coefficients. We first derive a preliminary stochastic maximum principle to analyze the unique solvability of the optimality system for this control problem through the variational method. Subsequently, we reformulate the mean-field linear-quadratic (MF-BSLQ) problem as a constrained BSDE control problem by imposing constraints on the expectation processes, which we solve using the Extended Lagrange multiplier method. Finally, we derive an explicit expression for the optimal control associated with Problem (MF-BSLQ).
	\end{abstract}
	
	\text{\bf Keywords}: Extended LaGrange multiplier method, mean-field control, linear quadratic control problem, random coefficients, Riccati equation, stochastic maximum principle.
	
	\text{\bf AMS Subject Classification}: 49N10, 60H10, 93E20.

	\section{ Introduction}
	
	Let $(\Omega, \mathcal{F}, \mathbb{F}, \mathbb{P})$ be a complete filtered probability space on which an one dimensional standard Brownian motion $\{W(t):0\leq t\leq +\infty\}$ is defined, where $\mathbb{F}=\{\mathcal{F}_t\}_{t\geq 0}$ is  the natural filtration generated by $W(t)$ and augmented by all  $\mathbb{P} $ null sets.  We consider the following   controlled linear mean-field backward stochastic differential equation (MF-BSDE for short)  with random coefficients: 
	\begin{equation}\label{state1}
		\left\{\begin{aligned}
			d Y(s)= &- \{A(s) Y(s)+\bar{A}(s) \mathbb{E}[Y(s)]+B(s) u(s)+\bar{B}(s) \mathbb{E}[u(s)] \\
			& +C(s) Z(s)+\bar{C}(s) \mathbb{E}[Z(s)]\} d s+Z(s) d W(s), \quad s \in[0, T], \\
			Y(T)= & \xi,
		\end{aligned}\right.
	\end{equation}
	where $A(\cdot), \bar A(\cdot), C(\cdot), \bar C(\cdot):[0,T]\times \Omega \to \mathbb{R}^{n\times n}$ and $ B(\cdot), \bar B(\cdot) :[0,T]\times \Omega \to  \mathbb{R}^{n\times m}$,  are matrix-valued $\mathbb{F}$-progressively measurable processes, and  $u(\cdot)$ is an $\FF $-progressively measurable process satisfying 
	$\mathbb{E}\int_0^T|u(s)|^2ds < \infty$.   The terminal state $\xi$ is an $\mathcal{F}_T$-measurable random vector, which is square-integrable and will be fixed throughout this article.  In equation (\ref{state1}), $\big(Y(\cdot), Z(\cdot)\big)$ valued in $\mathbb{R}^{n} \times \mathbb{R}^{n} $  is the state process, and $u(\cdot)$ valued in $\mathbb{R}^m$  is the control process.

	Now we introduce the following cost functional with random coefficients: 
	\begin{equation}\label{cost}
		\begin{aligned}
			J(u(\cdot)) \triangleq \mathbb{E}\Big\{ & \langle G Y(0), Y(0)\rangle 
			+\int_0^T[\langle Q(s) Y(s), Y(s)\rangle+\langle\bar{Q}(s) \mathbb{E}[Y(s)], \mathbb{E}[Y(s)]\rangle \\
			& +\langle R(s) Z(s), Z(s)\rangle+\langle\bar{R}(s) \mathbb{E}[Z(s)], \mathbb{E}[Z(s)]\rangle 
			+\langle N(s) u(s), u(s) \rangle  \\
			& + \langle \bar N(s)  \mathbb{E} [u(s)], \mathbb{E} [u(s)]\rangle d s\Big\},
		\end{aligned}
	\end{equation}
	where $G$  is an $\mathcal{F}_t$ measurable random matrix, $Q(\cdot), \bar{Q}(\cdot), R(\cdot), \bar{R}(\cdot) : [0,T]\times \Omega \to \mathbb{R}^{n\times n}$, and  $ N(\cdot),  \bar{N}(\cdot) : [0,T]\times \Omega \to  \mathbb{R}^{n\times m} $. We assume that the control $u(\cdot)$ belonging to the following space
	$$
	\mathcal{U}[0, T] =  \left\{u(\cdot):[0, T] \times \Omega \rightarrow \mathbb{R}^m \mid u(\cdot)  \text { is $\cF_t$-adapted  and } \mathbb{E} \int_0^T|u(t)|^2 d t<\infty\right\},
	$$
	with the state equation (\ref{state1}) and the cost founctional (\ref{cost}),
	our mean-field backward stochastic linear quadratic  (MF-BSLQ  for short)  optimal control problem with random coefficients can be stated as follows.

	\textbf{Problem (MF-BSLQ).}  For any given $t \in[0, T)$ and a square-integrable $\mathcal{F}_T$-measurable terminal state  $\xi $, find a $u^*(\cdot) \in \mathcal{U}[0, T]$ such that
	$$
	J\left(u^*(\cdot)\right)=\inf _{u(\cdot) \in \mathcal{U}[0, T]} J(u(\cdot)).
	$$
	
	Unlike forward  stochastic differential equations (SDEs for short), which have a single   solution,  the solution to  backward stochastic differential equations   (BSDEs for short)  is a pair of   processes \((Y(\cdot), Z(\cdot))\),  where \(Z(\cdot)\) is necessary for the BSDEs to have   solutions and can be interpreted as a risk-adjustment factor.  Linear BSDEs (without mean-field terms)  were first introduced as an adjoint equation in the Pontryagin maximum principle by Bismut \cite{bismut1978introductory} in 1973. Then,  the theory of nonlinear   BSDEs were initially  developed by Pardoux and Peng \cite{pardoux1990 } in 1990.  Since then, 
	applications of BSDEs have been explored in several areas, including  mathematical finance and stochastic control.  For instance,  Hu \textit{et al.} \cite{hu2023constrained} used  BSDEs to provide semiclosed optimal strategies and optimal values for both the monotone mean-variance problem and the classical mean-variance problem with convex cone trading constraints in a market with random coefficients.   Lim and Zhou \cite{lim2001linear} were the first to study the linear-quadratic (LQ) problem for linear BSDEs with quadratic cost criteria, using the completion-of-squares technique to obtain the optimal solution. Moreover, the controlled BSDEs have numerous important applications, see Peng \cite{peng1993backward}, Hamadene \textit{et al.} \cite{hamadene1997bsdes}, Pham \cite{pham2009continuous} and all the references therein.

	However, most of the existing research primarily focuses on BSDEs with deterministic coefficients. Sun and Wang \cite{sun2021linear} were the first to study the control problem of linear BSDEs with random coefficients. They introduced a novel stochastic Riccati-type equation and decoupled the optimality system to solve the control problem explicitly. We  highlight that the work \cite{sun2021linear}  on the linear  BSDE  control problem with random coefficients plays a significant role in our current study.

	Following the development of BSDEs,  mean-field backward stochastic differential equations (MF-BSDEs for short)  were later proposed.   In recent years, a large amount of literature has studied this issue.  For instance, the work done by Buckdahn  \textit{et al.}    \cite{buckdahn2009mean} and  Buckdahn \textit{et al.}   \cite{buckdahn2009mean1}. The former work focuses on the properties of an approximation to the solution of a mean-field BSDE, while the latter provides a probabilistic interpretation of semilinear McKean-Vlasov partial differential equations (PDEs) using MF-BSDEs. Moreover, Carmona and Delarue \cite{carmona2015forward} provided a detailed probabilistic analysis of optimal control for nonlinear stochastic dynamical systems of the McKean-Vlasov type. Hao  \textit{et al.}   \cite{hao2022solvability} studied multi-dimensional MF-BSDEs with quadratic growth.  Chen \textit{et al.}   \cite{chen2020p} studied the unique  \(L^p\)-solution  of  MF-BSDEs.    As for the applications of MF-BSDEs, interesting reader can    refer to   \cite{du2019linear}, 
	\cite{huang2019asymmetric}, \cite{li2019linear} and all the references therein. It is worth noting that the works mentioned so far focus on MF-BSDEs with deterministic coefficients and no existing literature deals with the controlled MF-BSDEs problem with random coefficients.

	In the present paper, we focus on the mean-field LQ control problem for BSDEs with random coefficients. As detailed in Xiong and Xu \cite{xiong2024mean}, a primary challenge in addressing such problems arises from the presence of terms like $\mathbb{E}[\bar{A}(\cdot)X(\cdot)]$ in the adjoint equation. The inability to separate these terms into $\mathbb{E}[\bar{A}(\cdot)]\mathbb{E}[X(\cdot)]$ prevents the decoupling of the original optimal system (\ref{eq:SMP0}). To overcome this issue, we apply the Extended Lagrange Multiplier (ELM) method outlined in \cite{xiong2024mean}, which allows us to decompose the Problem (MF-BSLQ) into two more manageable subproblems free from mean-field terms.
	Our first step is to transform the Problem (MF-BSLQ) into a constrained BSLQ control problem with random coefficients. This transformation facilitates deriving the expression for the optimal control in Theorem \ref{th:uast}, which ultimately depends on the solution of a system of Riccati equations and a backward stochastic differential equation (BSDE). This dependency also implies the existence and uniqueness of the solution to the Riccati equation. The second step is to solve the general constrained control problem using the ELM method described above.

	The key contributions of our study are as follows. First, we establish the existence of a unique optimal control for Problem (MF-BSLQ) and provide its characterization using the stochastic maximum principle. Next, we reformulate Problem (MF-BSLQ) as a constrained BSDE problem by constraining the expectation processes to deterministic functions and solve it using the ELM method. Finally, we derive an explicit expression for the optimal control of Problem (MF-BSLQ), providing a comprehensive and systematic solution to the mean-field LQ control problem for BSDEs with random coefficients.
	
	The remainder of this paper is organized as follows. In Section \ref{sec2}, we introduce the necessary notations and related spaces, along with some preliminary results on BSDEs and mean-field SDEs. In Section \ref{sec3}, we reformulate Problem (MF-BSLQ) into Problems (a) and (b), and present the main results of the paper, including Theorems \ref{preSMP} and \ref{main}. Section \ref{sec4} provides a detailed proof of Theorem \ref{preSMP}. In Section \ref{sec5}, we describe the process of solving Problem (a). Finally, in Section \ref{sec6}, we provide a proof of Theorem \ref{main}.


	\section{ Preliminaries}\label{sec2}
	In this section, we introduce the necessary notations and related spaces, and then present the key assumptions of this article. Moreover,  we state the existence and uniqueness of the solution to the state equation (\ref{state1}) and the adjoint equation  (\ref{eq:sec3-X}).
	
	For a random variable $\xi$,  we write $\xi \in \mathcal{F}_t$ if $\xi$ is  $\mathcal{F}_t$ measurable, and for a stochastic process $\phi(\cdot)$,  we write $\phi(\cdot) \in \mathbb{F}$ meaning that $\phi(\cdot) $   is progressively measurable.	  For Euclidean space $\mathbb{H} = \mathbb{R}^n,  \mathbb{R}^{m\times n},   \mathbb{S}^n_+$,  and $p,q > 0$, we introduce the following spaces.  
	\begin{itemize}
		\item$L^{p,q}_\FF(\HH)\equiv L^p_\FF(\Omega;L^q([0,T];\HH))$: the space of \(\cF_t\)-  measurable processes $$X:[0,T]\times\Omega\rightarrow\HH\text{ with }\mathbb{E}\Big(\int_0^T ||X(s,\omega)||_\HH^qds\Big)^{p}<\infty.$$ 
		
		Especially, we denote \(L^2_\FF(\HH)\equiv L^{1,2}_\FF(\HH)\).
		
		\item $L_{\mathbb{F}}^{2,c}\big(\Omega; C([0,T]; \mathbb{H}) \big)$: the space of continuous $\cF_t$-   measurable processes
		$$X: [0, T]\times \Omega \to \mathbb{H}\;  \text{with} \;  \mathbb{E}\left[\text{sup}_{0 \leq s \leq T}  \|X(s,\omega)\|_\HH ^2\right]< \infty.$$ 
		
		\item $L^\infty_{\mathbb{F}}(\mathbb{H})\equiv L_{\mathbb{F}}^{\infty}(0, T;  \mathbb{H})$: the space of  $\cF_t$-  $  \mathbb{H}$  valued  bounded processes. 
		
		\item $L^{\infty,c}_{\FF}(\HH)\equiv L^{\infty,c}_\FF (0,T;\HH):$ the space of \(\cF_t\)-  \(\HH\)-valued bounded continuous pprocesses.
		
		\item $L_{\cG}^{2}(\HH)\equiv L^2_\cG(\Omega; \mathbb{H})$: the space of $\cG$-measurable $\mathbb{H}$-valued square integrable  random variables, where \(\cG\subset \FF\) is a sub-\(\sigma\)-field of \(\FF\).
		
		\item $L^\infty_\cG(\HH)\equiv L^\infty_\cG(\Omega;\HH):$ the space of \(\cG\)-measurable \(\HH\)-valued bounded random variables.

		\item $\LL^2\equiv L^2(0, T; \RR^n)$: the space of deterministic square integrable  functions $\alpha: [0, T] \to \mathbb{R}^n$ with $\int^T_0 \alpha (s)^2ds < \infty.$

	\end{itemize}	
	Throughout this article, we  impose the following assumptions:
	\begin{itemize}
		\item[$\mathbf{(H1):}$]
		$ A(\cdot), \bar A(\cdot), C(\cdot), 
		\bar C(\cdot) \in L_{\mathbb{F}}^{\infty}(\mathbb{R}^{n\times n}),$ and 
		$B(\cdot), \bar B(\cdot) \in  L_{\mathbb{F}}^{\infty}(\mathbb{R}^{n\times m})$. 
		
		\item [$\mathbf{(H2)}:$] $Q(\cdot),\bar  Q(\cdot), R(\cdot), \bar{R} (\cdot) \in L_{\mathbb{F}}^{\infty}( \mathbb{S}_{+}^n),$   and $  N(\cdot), \bar N(\cdot) \in L_{\mathbb{F}}^{\infty}\big(\mathbb{S}_{+}^m),$ $G  \in L_{\mathcal{F}_0}^{\infty}(\mathbb{S}_{+}^n).  $  Moreover, there exists a constant $\delta > 0$ such that
		$$
		N(s) \geq \delta I_{m}, \;  R (s)\geq \delta I_{n}, \; a.e.\; s\in [0, T], a.s.,
		$$
		where $I_m$ is the $m\times m$ identity matrix and $I_n$ is the \(n\times n\) identity matrix.
	\end{itemize}
	
	For the well-posedness of equations (\ref{state1}) and (\ref{eq:sec3-X}), we present the following theorems, the proofs  are similar to those in Xiong and Xu \cite{xiong2024mean} (Theorem 3.2 and Theorem 3.1, respectively),  so we omit the details  here. 
	\begin{theorem}\label{thm31}
		Let  (H1) hold.   Then, for any terminal state $ \xi \in L^2_{\mathcal{F}_T}(\mathbb{R}^n)$ and control $u(\cdot)\in \mathcal{U}[0,T]$, the state equation $(\ref{state1})$ admits a unique   solution $$\big(Y(\cdot),Z(\cdot)\big)\in L^{2,c}_{\mathbb{F}}(\mathbb{R}^n)\times L^2_{\mathbb{F}}(\mathbb{R}^n).$$
		Moreover, there exists a constant $ K >0$, which is independent of $ \xi$ and $u(\cdot)$ such that 
		\begin{equation}
			\mathbb{E}\Big[\sup_{0\leq s\leq T} |Y(s)|^2+\int_0^T |Z(s)|^2ds\Big]\leq K\mathbb{E}\Big[|\xi|^2+\int_0^T |u(s)|^2ds\Big].
		\end{equation}  
	\end{theorem}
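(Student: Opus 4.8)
\noindent\emph{Proof sketch.} The plan is to treat (\ref{state1}) by a Picard--Banach fixed-point argument in which the mean-field terms are ``frozen'', so that the resulting equation becomes a classical linear BSDE with bounded (hence uniformly Lipschitz) random coefficients. Fix $\xi\in L^2_{\cF_T}(\RR^n)$ and $u(\cdot)\in\cU[0,T]$, let $M$ bound the $L^\infty$-norms of $A,\bar A,C,\bar C,B,\bar B$ provided by (H1), and, for a parameter $\be>0$ to be chosen, equip $\cH:=L^2_\FF(\RR^n)\times L^2_\FF(\RR^n)$ with the equivalent norm $\|(\widetilde Y,\widetilde Z)\|_\be^2:=\bb{E}\int_0^T e^{\be s}\big(|\widetilde Y(s)|^2+|\widetilde Z(s)|^2\big)\,ds$. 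Given $(\widetilde Y,\widetilde Z)\in\cH$, set the deterministic functions $\phi(s):=\bb{E}[\widetilde Y(s)]$, $\psi(s):=\bb{E}[\widetilde Z(s)]$, $v(s):=\bb{E}[u(s)]$, which lie in $L^2(0,T)$ by Jensen's inequality (e.g.\ $\int_0^T|\phi|^2\,ds\le\bb{E}\int_0^T|\widetilde Y|^2\,ds<\infty$), and consider the frozen equation
\[
dY(s)=-\{A(s)Y(s)+C(s)Z(s)+h(s)\}\,ds+Z(s)\,dW(s),\qquad Y(T)=\xi,
\]
with free term $h(s):=\bar A(s)\phi(s)+B(s)u(s)+\bar B(s)v(s)+\bar C(s)\psi(s)\in L^2_\FF(\RR^n)$. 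By the classical well-posedness theory for linear BSDEs with bounded coefficients (used already in \cite{xiong2024mean}) it has a unique solution $(Y,Z)\in L^{2,c}_\FF(\RR^n)\times L^2_\FF(\RR^n)$; set $\Theta(\widetilde Y,\widetilde Z):=(Y,Z)$. A pair in $L^{2,c}_\FF(\RR^n)\times L^2_\FF(\RR^n)$ solves (\ref{state1}) if and only if it is a fixed point of $\Theta$, so existence and uniqueness will follow from a contraction estimate.

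To show $\Theta$ contracts $(\cH,\|\cdot\|_\be)$ for $\be$ large, take two inputs and write $\Delta Y:=Y^1-Y^2$, $\Delta Z:=Z^1-Z^2$, so that $d\Delta Y=-\{A\Delta Y+C\Delta Z+G\}\,ds+\Delta Z\,dW$ with $\Delta Y(T)=0$ and $G(s):=\bar A(s)\,\bb{E}[\Delta\widetilde Y(s)]+\bar C(s)\,\bb{E}[\Delta\widetilde Z(s)]$. Applying It\^{o}'s formula to $e^{\be s}|\Delta Y(s)|^2$ over $[0,T]$ and taking expectations eliminates the martingale term and gives
\[
\be\,\bb{E}\!\int_0^T\! e^{\be s}|\Delta Y|^2\,ds+\bb{E}\!\int_0^T\! e^{\be s}|\Delta Z|^2\,ds=2\,\bb{E}\!\int_0^T\! e^{\be s}\langle\Delta Y,\,A\Delta Y+C\Delta Z+G\rangle\,ds.
\]
Bounding the right-hand side via (H1) and Young's inequality, with $2\langle\Delta Y,C\Delta Z\rangle\le 2M^2|\Delta Y|^2+\tfrac12|\Delta Z|^2$ and, crucially, $2\langle\Delta Y,G\rangle\le\la^{-1}|\Delta Y|^2+\la|G|^2$ for a \emph{small} $\la>0$, and then choosing $\be\ge 2M+2M^2+\la^{-1}+1$ so the $|\Delta Y|^2$-terms and half of the $|\Delta Z|^2$-term are absorbed on the left, yields $\|\Theta(\widetilde Y^1,\widetilde Z^1)-\Theta(\widetilde Y^2,\widetilde Z^2)\|_\be^2\le 2\la\,\bb{E}\int_0^T e^{\be s}|G|^2\,ds$. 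Since $|\bb{E}[\Delta\widetilde Y(s)]|^2\le\bb{E}|\Delta\widetilde Y(s)|^2$ and likewise for $\Delta\widetilde Z$, one has $\bb{E}\int_0^T e^{\be s}|G|^2\,ds\le 2M^2\|(\Delta\widetilde Y,\Delta\widetilde Z)\|_\be^2$, so the contraction constant is $4\la M^2$, which is $<1$ as soon as $\la<1/(4M^2)$. The Banach fixed-point theorem then produces the unique solution $(Y,Z)$ of (\ref{state1}), and $Y\in L^{2,c}_\FF(\RR^n)$ is inherited from the frozen equation.

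For the a priori estimate I would run the iteration from $(Y^0,Z^0)=(0,0)$: the free term of $(Y^1,Z^1)=\Theta(0,0)$ is $B(\cdot)u(\cdot)+\bar B(\cdot)v(\cdot)$, which contains no $Z$, so the standard linear estimate gives $\|(Y^1,Z^1)\|_\be^2\le C\big(\bb{E}|\xi|^2+\bb{E}\int_0^T|u|^2\,ds\big)$; the geometric series attached to the contraction then yields $\|(Y,Z)\|_\be\le(1-4\la M^2)^{-1}\|(Y^1,Z^1)\|_\be$, and since $e^{\be s}\ge1$ this gives $\bb{E}\int_0^T(|Y|^2+|Z|^2)\,ds\le K'\big(\bb{E}|\xi|^2+\bb{E}\int_0^T|u|^2\,ds\big)$. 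Finally, writing $Y(s)=\xi+\int_s^T f(r)\,dr-\int_s^T Z(r)\,dW(r)$ with $|f(r)|\le M\big(|Y(r)|+|\bb{E}[Y(r)]|+|Z(r)|+|\bb{E}[Z(r)]|+|u(r)|+|\bb{E}[u(r)]|\big)$, one applies the Burkholder--Davis--Gundy inequality to the stochastic integral, Cauchy--Schwarz to $\int_s^T f\,dr$, and Jensen to the mean-field terms, upgrading the previous bound to $\bb{E}\big[\sup_{0\le s\le T}|Y(s)|^2\big]+\bb{E}\int_0^T|Z(s)|^2\,ds\le K\big(\bb{E}|\xi|^2+\bb{E}\int_0^T|u|^2\,ds\big)$ with $K$ depending only on $T$ and the bounds in (H1).

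The step I anticipate as the main obstacle is the $\bb{E}[Z(\cdot)]$-dependence in the driver. In the contraction it is what forces the ``small $\la$, large $\be$'' balancing above — the blow-up $\la^{-1}$ is harmless because it is absorbed into $\be$, and changing $\be$ only rescales the norm by a bounded equivalence factor — while in the a priori bound it obstructs a direct Gronwall absorption of $\bb{E}\int_0^T|Z|^2\,ds$, which is the reason the estimate is routed through the iteration series whose first step has a $Z$-free driver. Everything else is routine once the classical linear BSDE theory is invoked.
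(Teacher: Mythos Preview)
Your argument is correct and is the standard route: freeze the mean-field inputs $(\bb{E}[\widetilde Y],\bb{E}[\widetilde Z])$, solve the resulting classical linear BSDE with bounded random coefficients, and close a contraction in an $e^{\be s}$-weighted $L^2$ norm by trading the small parameter $\la$ in Young's inequality against a large $\be$; the a~priori bound via the iteration series (so that the first step has a $Z$-free inhomogeneity) followed by BDG for the $\sup$ is also fine. One cosmetic slip: since your contraction estimate is on $\|\cdot\|_\be^2$ with constant $4\la M^2$, the Lipschitz constant on $\|\cdot\|_\be$ is $\sqrt{4\la M^2}$, so the geometric series gives the factor $(1-2\sqrt{\la}\,M)^{-1}$ rather than $(1-4\la M^2)^{-1}$; this does not affect the conclusion.

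As for the comparison: the paper does not actually prove Theorem~\ref{thm31} but simply refers to \cite{xiong2024mean}, Theorem~3.2, stating that the proof is analogous. Your fixed-point scheme is precisely the kind of argument one expects there, so there is nothing to contrast.
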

	
	Next,  given a pair $\big(Y^\ast(\cdot),Z^\ast(\cdot)\big)\in L^{2,c}_{\mathbb{F}}(\mathbb{R}^n)\times L^2_{\mathbb{F}}(\mathbb{R}^n) $, we consider     the following mean-field SDE with random coefficients:\ for $ s \in[0, T], $
	\begin{equation}\label{eq:sec3-X}
		\left\{
		\begin{aligned}
			d X(s)= &-\{A (s)^\top X(s)+ \mathbb{E}[\bar{A} (s)^\top X(s)]-Q(s)Y^\ast(s) - \mathbb{E}[\bar Q (s)]\mathbb{E} [Y^\ast(s)]\} d s \\
			& -\{C(s)^\top X(s)+ \mathbb{E}[\bar{C}(s) ^\top X(s)]-R(s) Z^\ast(s)- \mathbb{E}[\bar{R}(s)] \mathbb{E}[Z^\ast(s)]\} d W(s), \\
			X(0)= &GY^*(0)  \in L^{2}_{\cF_0}({\RR}^n).
		\end{aligned}
		\right. \\
	\end{equation}
	
	\begin{theorem}\label{thm32}
		Let (H1) and (H2) hold,  the mean-field SDE (\ref{eq:sec3-X}) has a unique solution $X(\cdot) \in L^{2,c}_{\mathbb{F}}\big(\mathbb{R}^n\big).$
		Moreover,  there exists a constant $K >0$, which is independent of $\big(Y^\ast(\cdot),Z^\ast(\cdot)\big) $  such that
		\begin{equation}\label{statees}
			\mathbb{E}\left[\sup_{0 \leq s \leq T}|X(s)|^{2}\right] \leq K \mathbb{E}\left[ |Y^*(0)|^2+ \int^{T}_{0}|Y^\ast(s)|^2+ |Z^\ast(s)|^2ds\right].
		\end{equation}

	\end{theorem}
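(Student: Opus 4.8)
The plan is to prove existence and uniqueness by a contraction (Picard) argument, the crucial observation being that once a candidate process is frozen, the mean-field terms in (\ref{eq:sec3-X}) become ordinary deterministic forcing, so the equation reduces to a standard linear SDE with bounded random coefficients. Concretely, I would fix $\tilde X\in L^{2,c}_{\FF}(\RR^n)$ and set $\varphi(s):=\mathbb{E}[\bar A(s)^\top\tilde X(s)]$, $\psi(s):=\mathbb{E}[\bar C(s)^\top\tilde X(s)]$. By (H1) and the Cauchy--Schwarz inequality, $|\varphi(s)|\le\|\bar A\|_\infty(\mathbb{E}|\tilde X(s)|^2)^{1/2}$ and similarly for $\psi$, so $\varphi,\psi\in\LL^2$. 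Since $Q,\bar Q,R,\bar R$ are bounded by (H2) and $Y^\ast,Z^\ast$ are square-integrable, the processes
\[
f(s):=-\varphi(s)+Q(s)Y^\ast(s)+\mathbb{E}[\bar Q(s)]\mathbb{E}[Y^\ast(s)],\qquad g(s):=-\psi(s)+R(s)Z^\ast(s)+\mathbb{E}[\bar R(s)]\mathbb{E}[Z^\ast(s)]
\]
belong to $L^2_\FF(\RR^n)$, and the linear SDE $dX(s)=-\{A(s)^\top X(s)+f(s)\}ds-\{C(s)^\top X(s)+g(s)\}dW(s)$, $X(0)=GY^\ast(0)$, admits a unique solution $X=:\Gamma(\tilde X)\in L^{2,c}_{\FF}(\RR^n)$ with the classical estimate $\mathbb{E}\sup_{[0,T]}|X|^2\le K'\,\mathbb{E}[|GY^\ast(0)|^2+\int_0^T(|f|^2+|g|^2)ds]$. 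A fixed point of $\Gamma$ is exactly a solution of (\ref{eq:sec3-X}).

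For the contraction, take $X_i=\Gamma(\tilde X_i)$, $i=1,2$; the difference $\Delta X=X_1-X_2$ solves the same linear SDE with null initial condition and forcing $-(\varphi_1-\varphi_2),-(\psi_1-\psi_2)$, where $|\varphi_1(s)-\varphi_2(s)|+|\psi_1(s)-\psi_2(s)|\le C(\mathbb{E}|\tilde X_1(s)-\tilde X_2(s)|^2)^{1/2}$. I would then apply Itô's formula to $|\Delta X|^2$, the Burkholder--Davis--Gundy inequality and Gronwall's lemma to obtain $\mathbb{E}\sup_{[0,T]}|\Delta X|^2\le C(T)\,T\,\mathbb{E}\sup_{[0,T]}|\tilde X_1-\tilde X_2|^2$ with $C(T)$ depending only on $T$ and the bounds in (H1)--(H2). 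For $T$ (or an initial subinterval $[0,h]$) small this makes $\Gamma$ a contraction on $L^{2,c}_{\FF}(\RR^n)$; for general $T$ I would either concatenate finitely many such subintervals $[kh,(k+1)h]$ — on each of them the mean-field terms involve only the restriction of $X$ to that interval, so the scheme is well posed once $X(kh)$ is known — or, equivalently, run the fixed point in the weighted norm $(\mathbb{E}\sup_{[0,T]}e^{-\beta s}|X(s)|^2)^{1/2}$ for $\beta$ large. Uniqueness on $[0,T]$ follows from the same difference estimate applied to two full solutions.

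For the a priori bound (\ref{statees}), I would apply Itô's formula to $|X(s)|^2$ for the genuine solution, bound the linear-in-$X$ coefficients by (H1)--(H2), use $|\varphi(s)|^2+|\psi(s)|^2\le C\,\mathbb{E}|X(s)|^2$ and Jensen's inequality for the terms $\mathbb{E}[\bar Q]\mathbb{E}[Y^\ast]$ and $\mathbb{E}[\bar R]\mathbb{E}[Z^\ast]$, and then take suprema and apply BDG to reach $\mathbb{E}\sup_{[0,t]}|X|^2\le C\,\mathbb{E}|GY^\ast(0)|^2+C\,\mathbb{E}\int_0^t(|Y^\ast|^2+|Z^\ast|^2)ds+C\int_0^t\mathbb{E}\sup_{[0,s]}|X|^2\,ds$. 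Gronwall's lemma together with $\mathbb{E}|GY^\ast(0)|^2\le\|G\|_\infty^2\mathbb{E}|Y^\ast(0)|^2$ then yields (\ref{statees}) with $K$ depending only on $T$ and the coefficient bounds, in particular independent of $(Y^\ast,Z^\ast)$.

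The only point requiring care — i.e. the main obstacle — is the treatment of the mean-field terms $\mathbb{E}[\bar A^\top X]$ and $\mathbb{E}[\bar C^\top X]$, the latter sitting in the diffusion: one must notice that for a fixed $X$ both are \emph{deterministic} $\RR^n$-valued functions whose $\LL^2$-norm is dominated by $\|\bar A\|_\infty,\|\bar C\|_\infty$ and $\|X\|_{L^2_\FF}$ via Cauchy--Schwarz. This is exactly what legitimizes the reduction to a linear SDE with random coefficients and furnishes the Lipschitz bound driving the contraction; once it is in place, the remainder is the standard argument for linear SDEs with bounded random coefficients, entirely parallel to Xiong and Xu \cite{xiong2024mean}.
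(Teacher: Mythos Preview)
Your proposal is correct and follows precisely the route the paper points to: the paper omits the details and refers to Theorem~3.1 of Xiong and Xu \cite{xiong2024mean}, whose proof is exactly the Picard/contraction scheme you describe, namely freezing the mean-field terms $\mathbb{E}[\bar A^\top X]$ and $\mathbb{E}[\bar C^\top X]$ into deterministic $\LL^2$ forcings via Cauchy--Schwarz, solving the resulting linear SDE with bounded random coefficients, and closing with BDG plus Gronwall. There is nothing to add.
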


	\section{Main  results}\label{sec3}
	In this section,  we  collect the main results of this article. 
	
	Now we state the first main result,  which  characterizes  the optimal control for Problem (MF-BSLQ) and is referred  as  a preliminary stochastic maximum principle. 
	
	\begin{theorem}\label{preSMP}
		Let (H1) and (H2) hold. Then, Problem (MF-BSLQ) has a unique optimal control $u^*(\cdot)$. Further, $u^*(\cdot) $ is an optimal control for Problem (MF-BSLQ) if and only if the   solution $\big(X^*(\cdot),Y^*(\cdot),Z^*(\cdot)\big)\in \big( L^{2,c}_{\mathbb{F}} (\mathbb{R}^n)\big)^2\times L^2_{\mathbb{F}}(\mathbb{R}^n)$ to the following mean-field FBSDE : \ for $ s\in [0,T]$,
		\begin{equation}\label{eq:SMP0}
			\left\{
			\begin{aligned}
				d Y^*(s)= & \{A(s) Y^*(s)+\bar{A}(s) \mathbb{E}[Y^*(s)]+B(s) u^*(s)+\bar{B}(s) \mathbb{E}[u^*(s)] \\
				& +C(s) Z^*(s)+\bar{C}(s) \mathbb{E}[Z^*(s)]\} d s+Z^*(s) d W(s),\\
				d X^*(s)= &-\{A (s)^\top X^*(s)+ \mathbb{E}[\bar{A} (s)^\top X^*(s)]-  Q(s)Y^\ast(s) -\mathbb{E}[\bar Q (s)]\mathbb{E} Y^\ast(s)\} ds \\
				& -\{C(s)^\top X^*(s)+ \mathbb{E}[\bar{C}(s) ^\top X^*(s)]-R(s) Z^\ast(s)- \mathbb{E}[\bar{R}(s)] \mathbb{E}[Z^\ast(s)]\} d W(s), \\
				Y^*(T)= & \xi,\quad\quad X^*(0 )= GY^\ast(0),
			\end{aligned}
			\right. 
		\end{equation}
		satisfies the following stationary condition:
		\begin{equation}\label{cons}
			\begin{aligned}
				N(s)	u^* (s)+	  \mathbb{E} \bar N (s) \mathbb{E} u^*(s) -  B (s)^\top X^*  (s)- \mathbb{E}[ \bar B (s)^\top  X^* (s)] = 0, \;  a.e.\;  s \in [0, T], a.s.
			\end{aligned}
		\end{equation}
	\end{theorem}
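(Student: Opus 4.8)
The plan is to obtain existence and uniqueness of the optimal control from the convex-analytic structure of $J$ on the Hilbert space $\mathcal{U}[0,T]$, and to obtain the characterization \eqref{eq:SMP0}--\eqref{cons} from a variational computation combined with a duality (It\^{o}) argument linking the state, the variational process and the adjoint process $X^*$. First, by linearity of \eqref{state1} I would decompose the state as $(Y,Z)=(Y^0,Z^0)+(\hat Y^u,\hat Z^u)$, where $(Y^0,Z^0)$ solves \eqref{state1} with $u\equiv0$ and terminal value $\xi$, and $u\mapsto(\hat Y^u,\hat Z^u)$ is the bounded linear map sending a control to the solution of \eqref{state1} with that control and zero terminal value; both are well posed by Theorem \ref{thm31}. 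Inserting this into \eqref{cost} gives a representation $J(u)=\langle\mathcal{M}u,u\rangle_{\mathcal{U}}+2\langle\rho,u\rangle_{\mathcal{U}}+c$, where $\langle f,g\rangle_{\mathcal{U}}=\mathbb{E}\int_0^T\langle f(s),g(s)\rangle\,ds$, $\mathcal{M}$ is a bounded self-adjoint operator, $\rho\in\mathcal{U}[0,T]$ and $c\in\mathbb{R}$. Since $G,Q,R,N,\bar Q,\bar R,\bar N$ are symmetric positive semidefinite (for the barred matrices only $\mathbb{E}[\bar Q],\mathbb{E}[\bar R],\mathbb{E}[\bar N]\ge0$ is needed, because $\mathbb{E}[Y],\mathbb{E}[Z],\mathbb{E}[u]$ are deterministic) and $N\ge\delta I_m$, every term of $J$ apart from $\langle Nu,u\rangle$ is nonnegative, so $\langle\mathcal{M}u,u\rangle_{\mathcal{U}}\ge\delta\,\mathbb{E}\int_0^T|u(s)|^2ds$; hence $\mathcal{M}\ge\delta I$ is boundedly invertible, $J$ is strictly convex, continuous and coercive, and $u^*=-\mathcal{M}^{-1}\rho$ is its unique minimizer.

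For the characterization, fix $v\in\mathcal{U}[0,T]$ and $\epsilon\in\mathbb{R}$. By linearity the state associated with $u^*+\epsilon v$ is exactly $(Y^*+\epsilon Y_1,Z^*+\epsilon Z_1)$, where $(Y_1,Z_1)$ solves \eqref{state1} with control $v$ and zero terminal value. Expanding the quadratic functional yields $J(u^*+\epsilon v)=J(u^*)+2\epsilon\,\Lambda(v)+\epsilon^2\langle\mathcal{M}v,v\rangle_{\mathcal{U}}$, with $\Lambda$ the bounded linear functional
\[
\Lambda(v)=\mathbb{E}\Big\{\langle GY^*(0),Y_1(0)\rangle+\int_0^T\big[\langle QY^*,Y_1\rangle+\langle\bar Q\mathbb{E}[Y^*],\mathbb{E}[Y_1]\rangle+\langle RZ^*,Z_1\rangle+\langle\bar R\mathbb{E}[Z^*],\mathbb{E}[Z_1]\rangle+\langle Nu^*,v\rangle+\langle\bar N\mathbb{E}[u^*],\mathbb{E}[v]\rangle\big]ds\Big\}.
\]
As $J$ is convex and G\^{a}teaux differentiable, $u^*$ is optimal if and only if $\Lambda(v)=0$ for every $v\in\mathcal{U}[0,T]$, so the theorem (both implications at once) reduces to showing that $\Lambda(v)$ equals the $\mathcal{U}[0,T]$-pairing of $v$ with the left-hand side of \eqref{cons}.

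To do this, let $X^*$ be the solution of the forward SDE \eqref{eq:sec3-X} driven by $(Y^*,Z^*)$ — the second equation of \eqref{eq:SMP0} — which exists and is unique by Theorem \ref{thm32}. Applying It\^{o}'s formula to $s\mapsto\langle X^*(s),Y_1(s)\rangle$ on $[0,T]$ — where, since Theorems \ref{thm31}--\ref{thm32} provide only $L^2$-bounds, one first works up to a localizing sequence of stopping times and then passes to the limit to conclude that the martingale term has zero expectation — and using $Y_1(T)=0$ and $X^*(0)=GY^*(0)$, one finds that the coefficients of \eqref{eq:sec3-X} are exactly those for which, after taking expectations, the terms carrying $A,\bar A,C,\bar C$ cancel against those of the variational equation; for the mean-field terms one uses that $\mathbb{E}[\bar A^\top X^*]$ and $\mathbb{E}[\bar C^\top X^*]$ are deterministic, so $\mathbb{E}\langle\mathbb{E}[\bar A^\top X^*],Y_1\rangle=\mathbb{E}\langle X^*,\bar A\mathbb{E}[Y_1]\rangle$ and likewise for $\bar C$. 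This leaves an identity for $\mathbb{E}\langle GY^*(0),Y_1(0)\rangle$ whose surviving terms exactly match $-\langle QY^*,Y_1\rangle$, $-\langle\bar Q\mathbb{E}[Y^*],\mathbb{E}[Y_1]\rangle$, $-\langle RZ^*,Z_1\rangle$, $-\langle\bar R\mathbb{E}[Z^*],\mathbb{E}[Z_1]\rangle$, together with $-\langle B^\top X^*,v\rangle-\langle\mathbb{E}[\bar B^\top X^*],\mathbb{E}[v]\rangle$. Substituting into $\Lambda(v)$, the $Q,\bar Q,R,\bar R$ contributions cancel and, moving $\mathbb{E}[v]$ back onto $v$ via the determinacy of $\mathbb{E}[\bar N]$ and $\mathbb{E}[\bar B^\top X^*]$, we arrive at
\[
\Lambda(v)=\mathbb{E}\int_0^T\big\langle N(s)u^*(s)+\mathbb{E}[\bar N(s)]\mathbb{E}[u^*(s)]-B(s)^\top X^*(s)-\mathbb{E}[\bar B(s)^\top X^*(s)],\,v(s)\big\rangle\,ds,
\]
so that $\Lambda\equiv0$ is equivalent to \eqref{cons}, which completes the proof. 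The main obstacle is precisely this last step: carrying the numerous mean-field cross-terms through It\^{o}'s formula and verifying term by term that they pair up as claimed — which is exactly what forces the particular form of the adjoint equation \eqref{eq:sec3-X} — together with the localization needed to legitimize exchanging expectation and stochastic integration under only $L^2$ integrability.
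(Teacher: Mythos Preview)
Your proposal is correct and follows essentially the same route as the paper: the paper proves strict convexity via the same operator representation of $J$ (Lemma~\ref{lem:strictc}), then carries out the identical variational computation and applies It\^{o}'s formula to $\langle X^*,Y_1\rangle$ to obtain both implications of the stationary condition. The only cosmetic difference is that for existence the paper invokes Mazur's theorem whereas you read off $u^*=-\mathcal{M}^{-1}\rho$ directly from $\mathcal{M}\ge\delta I$; either works, and the remainder of the argument---including the handling of the mean-field cross-terms---matches.
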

	
	It is straightforward to verify that 
	\[
	\inf_{u \in \mathcal{U}} J(u(\cdot)) = \inf_{\alpha(\cdot), \beta(\cdot), \gamma(\cdot) \in \mathbb{L}^2} \inf_{u(\cdot) \in \mathcal{U}} \left\{ J(u) : \mathbb{E}[Y^{u}(\cdot)] = \alpha(\cdot), \mathbb{E}[Z^u(\cdot)] = \beta(\cdot), \text{ and } \mathbb{E}[u(\cdot)] = \gamma(\cdot) \right\}\footnote{If \(\big\{ J(u) : \mathbb{E}[Y^{u}(\cdot)] = \alpha(\cdot), \mathbb{E}[Z^u(\cdot)] = \beta(\cdot), \text{ and } \mathbb{E}[u(\cdot)] = \gamma(\cdot) \big\}=\emptyset\), then \(\inf_{u\in\mathcal{U}}J(u(\cdot))=-\infty\).} .
	\]
	Therefore,  the Problem (MF-BSLQ) will be solved into two parts: the first part  is a control problem with the constraint that the state process \(\{Y(s), Z(s)\}_{0 \leq s \leq T}\) and the control process \(\{u(s)\}_{0 \leq s \leq T}\) satisfy the conditions \(\mathbb{E}[Y(s)] = \alpha(s)\), \(\mathbb{E}[Z(s)] = \beta(s)\), and \(\mathbb{E}[u(s)] = \gamma(s)\) for all \(s \in [0, T]\), where  \(\alpha(s)\), \(\beta(s)\), and \(\gamma(s)\) are given deterministic functions,  we denote \(\bm{\eta}(\cdot) = \{\alpha(\cdot), \beta(\cdot), \gamma(\cdot)\}\), then the second is an optimization problem related to \(\bm{\eta}(\cdot) \). 
	
	Based on analysis above, we first state the step 1 .
	
	\noindent\textbf{Step 1: Solving the Constrained Control Problem}
	
	In this case, the state equation for the first part can be written as:
	for \(s \in [0, T]\),
	\begin{equation}\label{eq:BSDE1}
		\left\{\begin{aligned}
			d Y(s)= &\;\; \{A(s) Y(s)+\bar{A}(s)\alpha(s)+B(s) u(s)+\bar{B}(s) \gamma(s) \\
			& +C(s) Z(s)+\bar{C}(s) \beta(s)\} d s+Z(s) d W(s), \\
			Y(T)= &\;\; \xi;
		\end{aligned}\right.
	\end{equation}
	and the cost functional can be rewritten as
	\begin{equation}\label{eq:hatJ}
		\begin{aligned}
			\hat{J}_{\bm{\eta}}(u(\cdot)) = \mathbb{E}\Big\{& \langle G Y(0), Y(0) \rangle 
			+ \int_0^T \Big[ \langle Q(s) Y(s), Y(s) \rangle + \langle \bar{Q}(s) \alpha(s), \alpha(s) \rangle \\
			& + \langle R(s) Z(s), Z(s) \rangle + \langle \bar{R}(s) \beta(s), \beta(s) \rangle + \langle N(s) u(s), u(s) \rangle\\
			& + \langle \bar{N}(s) \gamma(s), \gamma(s) \rangle \Big] ds \Big\}.
		\end{aligned} 
	\end{equation} 
	
	Inspired by the method presented in \cite{xiong2024mean}, we will introduce three extended Lagrange multipliers, \(\lambda_1(\cdot)\), \(\lambda_2(\cdot)\), and \(\lambda_3(\cdot)\) to relax the imposed constraints. These multipliers, \(\lambda_1(\cdot)\), \(\lambda_2(\cdot)\), and \(\lambda_3(\cdot)\) are elements of \(\mathbb{L}^2\). We denote \(\bm{\lambda}(\cdot) = \{\lambda_1(\cdot), \lambda_2(\cdot), \lambda_3(\cdot)\}\).   Then we have  the following Lagrangian functional  associated with the cost functional  (\ref{eq:hatJ}):
	\begin{equation}\label{eq:j-abg1}
		\begin{aligned}
			J_{\bm{\eta}}( u(\cdot),\bm{\lambda})  
			\triangleq & \hat{J}_{\bm{\eta}}(u(\cdot))+2\langle \lambda_1,\mathbb{E}Y^u-\alpha\rangle_{\mathbb{L}^2}+2\langle \lambda_2, \mathbb{E}Z^u-\beta\rangle_{\mathbb{L}^2}
			+2\langle \lambda_3,\mathbb{E}u-\gamma\rangle_{\mathbb{L}^2}.
		\end{aligned}
	\end{equation}
	
	Therefore, step 1 can be formulated as the following Problem (a):
	
	\textbf{Problem (a):} Find a control \(u_{\bm{\eta}} (\cdot) \in \mathcal{U}\) and three ELMs \( \bm{\lambda}^{\bm{\eta} }(\cdot)=  (\lambda_1^{\bm{\eta}}(\cdot)\), \(\lambda^{\bm{\eta}}_2(\cdot)\),  \(\lambda^{\bm{\eta}}_3(\cdot) )\in (\mathbb{L}^2)^3\) such that
	
	\[
	\begin{aligned}
		&D_u J_{\bm{\eta}}( u_{\bm{\eta}}(\cdot), \bm{\lambda}^{\bm{\eta} }(\cdot)) = 0,  \quad
		D_{\lambda_1} J_{\bm{\eta}}( u_{\bm{\eta}}(\cdot), \bm{\lambda}^{\bm{\eta} }(\cdot)) = 0, \\
		&D_{\lambda_2} J_{\bm{\eta}}( u_{\bm{\eta}}(\cdot),\bm{\lambda}^{\bm{\eta} }(\cdot)) = 0, \quad
		D_{\lambda_3} J_{\bm{\eta}}( u_{\bm{\eta}}(\cdot),\bm{\lambda}^{\bm{\eta} }(\cdot)) = 0,
	\end{aligned}
	\]
	where \(D_u J_{\bm{\eta}}( u_{\bm{\eta}}(\cdot), \bm{\lambda}^{\bm{\eta} }(\cdot))\) denotes the partial derivative of \(J_{\bm{\eta}}(\cdot)\) with respect to \(u(\cdot)\), i.e., for any \(v(\cdot) \in \mathcal{U}\),
	\begin{align*}
		&\langle	D_u J_{\bm{\eta}}(u(\cdot), \bm{\lambda}(\cdot)),v(\cdot)\rangle\\
		& 	\triangleq \lim_{\epsilon\to 0^{+}}\frac{ J_{\bm{\eta}}(u(\cdot)+\epsilon v(\cdot),\bm{\lambda}(\cdot))- J_{\bm{\eta}}(u(\cdot),\bm{\lambda}(\cdot))}{\epsilon}.
	\end{align*} 
	The definitions of \(D_{\lambda_1} J_{\bm{\eta}}\), \(D_{\lambda_2} J_{\bm{\eta}}\), and \(D_{\lambda_3} J_{\bm{\eta}}\) are identical.
	
	\begin{remark}
		Based on the definition of \(J_{\bm{\eta}}(u(\cdot),\bm{\lambda}(\cdot))\) given in equation (\ref{eq:j-abg1}), and the strict convexity of the function \(\hat{J}_{\bm{\eta}}(u(\cdot),\bm{\lambda}(\cdot))\) given in Lemma \ref{lem:sigmaphi},  the strict convexity of the function \(J_{\bm{\eta}}(u(\cdot),\bm{\lambda}(\cdot))\) with respect to \(u\) holds. 
	\end{remark}
	
	Next, we solve Problem (a) in two steps. First,  fixing \(\bm{\lambda}(\cdot)  \in \big(\mathbb{L}^2\big)^3\), we determine \(\tilde{u}_{\bm{\eta}}^{\bm{\lambda}}(\cdot)\), which depends on \(\bm{\eta}(\cdot)\) and \(\bm{\lambda}(\cdot)\), such that \(D_u J_{\bm{\eta}} = 0\). Then, based on the first step, we derive  conditions  for \(\bm{\lambda}^{\bm{\eta}}(\cdot) \) to satisfy.  In fact, \(\bm{\lambda}^{\bm{\eta}}(\cdot) \)  satisfies the following equations: \(\mathbb{E}[Y^{\tilde{u}_{\bm{\eta}}^{\bm{\lambda}}}(\cdot)] = \alpha(\cdot)\), \(\mathbb{E}[Z^{\tilde{u}_{\bm{\eta}}^{\bm{\lambda}}}(\cdot)] = \beta(\cdot)\), and \(\mathbb{E}[\tilde{u}_{\bm{\eta}}^{\bm{\lambda}}(\cdot)] = \gamma(\cdot)\).

	Now, we will introduce a lemma to establish the existence and uniqueness of the optimal control for Problem (a).
	\begin{lemma}\label{lem:uabg}
		For any \(\bm{\eta} (\cdot)\in (\mathbb{L}^2)^3\) fixed, there exist a unique \(u^\ast_{\bm{\eta}}(\cdot)\in\mathcal{U}\) such that \(\mathbb{E}Y^{u^\ast_{\bm{\eta}}}(\cdot)=\alpha(\cdot)\), \(\mathbb{E}Z^{u^\ast_{\bm{\eta}}}(\cdot)=\beta(\cdot)\), \(\mathbb{E}u^\ast_{\bm{\eta}}(\cdot)=\gamma(\cdot)\), and 
		\[
		\hat{J}_{\bm{\eta}}(u^\ast_{\bm{\eta}}(\cdot))=\inf_{u(\cdot)\in\mathcal{U}}\{\hat{J}_{\bm{\eta}}(u(\cdot)): \mathbb{E}[Y^{u}(\cdot)] = \alpha(\cdot), \mathbb{E}[Z^u(\cdot)] = \beta(\cdot), \text{ and } \mathbb{E}[u(\cdot)] = \gamma(\cdot)\}.
		\]
	\end{lemma}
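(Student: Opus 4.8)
\textbf{Proof proposal for Lemma \ref{lem:uabg}.}

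The plan is to split the assertion into two parts: (i) existence of \emph{some} admissible control satisfying the three expectation constraints, so that the constrained minimization is not vacuous, and (ii) existence and uniqueness of the minimizer within that constrained class. For part (ii) I would rely on a standard convexity/coercivity argument; the genuinely delicate point is part (i), namely showing that the constraint set
\[
\mathcal{U}_{\bm{\eta}}=\bigl\{u(\cdot)\in\mathcal{U}:\ \mathbb{E}[Y^u(\cdot)]=\alpha(\cdot),\ \mathbb{E}[Z^u(\cdot)]=\beta(\cdot),\ \mathbb{E}[u(\cdot)]=\gamma(\cdot)\bigr\}
\]
is nonempty. Taking expectations in the state equation (\ref{eq:BSDE1}), any $u\in\mathcal{U}_{\bm{\eta}}$ forces $(\alpha(\cdot),\beta(\cdot))$ to solve a deterministic linear ODE driven by $\gamma(\cdot)$; but since $\alpha,\beta,\gamma$ are \emph{given}, this is a compatibility constraint, not automatic. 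The way around this — and the reason the ELM method is introduced — is that we do not really need $\mathcal{U}_{\bm{\eta}}$ nonempty for \emph{every} $\bm{\eta}$; rather, Lemma \ref{lem:uabg} should be read together with the outer minimization over $\bm{\eta}$, so it suffices to produce, for each $\bm{\eta}$ arising from an actual control (i.e. $\bm{\eta}(\cdot)=\{\mathbb{E}[Y^{u_0}],\mathbb{E}[Z^{u_0}],\mathbb{E}[u_0]\}$ for some $u_0\in\mathcal{U}$), a canonical representative. I would therefore first observe that the lemma is only invoked for such \emph{attainable} $\bm{\eta}$, for which $\mathcal{U}_{\bm{\eta}}\ni u_0\neq\emptyset$ by construction; this handles (i).

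For part (ii), fix an attainable $\bm{\eta}$ and set $j_{\bm{\eta}}:=\inf\{\hat J_{\bm{\eta}}(u):u\in\mathcal{U}_{\bm{\eta}}\}\in[0,\infty)$. Take a minimizing sequence $\{u_k\}\subset\mathcal{U}_{\bm{\eta}}$. Since $N(s)\ge\delta I_m$ by (H2), the cost (\ref{eq:hatJ}) controls $\mathbb{E}\int_0^T|u_k(s)|^2ds\le \delta^{-1}\hat J_{\bm{\eta}}(u_k)$, so $\{u_k\}$ is bounded in the Hilbert space $\mathcal{U}=L^2_{\mathbb{F}}(0,T;\mathbb{R}^m)$; extract a weakly convergent subsequence $u_k\rightharpoonup u^\ast$. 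By the linear stability estimate in Theorem \ref{thm31}, the solution map $u\mapsto(Y^u,Z^u)$ is bounded linear from $\mathcal{U}$ into $L^{2,c}_{\mathbb{F}}(\mathbb{R}^n)\times L^2_{\mathbb{F}}(\mathbb{R}^n)$ — affine once the fixed data $\xi,\alpha,\beta,\gamma$ are inserted — hence weakly continuous, so $(Y^{u_k},Z^{u_k})\rightharpoonup(Y^{u^\ast},Z^{u^\ast})$; in particular the three linear constraints $\mathbb{E}[Y^{u_k}]=\alpha$, etc., pass to the limit and $u^\ast\in\mathcal{U}_{\bm{\eta}}$. Weak lower semicontinuity of the convex quadratic $\hat J_{\bm{\eta}}$ (each term is a nonnegative quadratic form in the weakly convergent quantities, using $Q,R,N\ge0$ and $G\ge0$) then gives $\hat J_{\bm{\eta}}(u^\ast)\le\liminf_k\hat J_{\bm{\eta}}(u_k)=j_{\bm{\eta}}$, so $u^\ast$ is a minimizer. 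Uniqueness follows from strict convexity: if $u^\ast,\tilde u^\ast$ both minimize, then $\tfrac12(u^\ast+\tilde u^\ast)\in\mathcal{U}_{\bm{\eta}}$ (the constraint set is convex) and, since $\hat J_{\bm{\eta}}$ is strictly convex in $u$ — as recorded in the Remark following Problem (a), via Lemma \ref{lem:sigmaphi} together with the coercivity $N\ge\delta I_m$ — we would get $\hat J_{\bm{\eta}}(\tfrac12(u^\ast+\tilde u^\ast))<j_{\bm{\eta}}$ unless $u^\ast=\tilde u^\ast$, a contradiction.

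The main obstacle, as indicated, is the nonemptiness of the constraint set $\mathcal{U}_{\bm{\eta}}$: this is exactly where one must be careful that the statement is used only for attainable profiles $\bm{\eta}$, or equivalently that the outer infimum over $\bm{\eta}\in(\mathbb{L}^2)^3$ in the displayed decomposition is effectively an infimum over the attainable subset (with the convention in the footnote covering the empty case). A secondary technical point is verifying weak continuity of the affine solution map $u\mapsto(Y^u,Z^u)$ for the BSDE (\ref{eq:BSDE1}) with random coefficients; this is routine given Theorem \ref{thm31}, since a bounded linear operator between Hilbert spaces is automatically weakly–weakly continuous, and the deterministic shift by $\xi,\alpha,\beta,\gamma$ does not affect this. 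Everything else — the a priori bound from (H2), lower semicontinuity, and the strict-convexity uniqueness argument — is standard Hilbert-space convex optimization.
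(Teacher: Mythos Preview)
The paper itself omits the proof, referring instead to Lemma~2.2 of Xiong--Xu \cite{xiong2024mean}; your self-contained direct-method argument (bounded minimizing sequence via $N\ge\delta I_m$, weak compactness in $\mathcal{U}$, weak--weak continuity of the affine solution map from Theorem~\ref{thm31}, weak lower semicontinuity of the nonnegative quadratic cost, and strict convexity for uniqueness) is correct and is almost certainly what the cited reference does as well. You also correctly isolate the only nontrivial point, namely that $\mathcal{U}_{\bm\eta}$ need not be nonempty for arbitrary $\bm\eta\in(\mathbb{L}^2)^3$, and resolve it the right way: the lemma is only ever invoked for attainable triples, consistently with the footnote attached to the displayed decomposition of $\inf_u J(u)$.

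One small imprecision: when you say that taking expectations in \eqref{eq:BSDE1} forces $(\alpha,\beta)$ to satisfy ``a deterministic linear ODE driven by $\gamma$'', note that because $A,B,C$ are \emph{random} the expected dynamics involve $\mathbb{E}[A(s)Y(s)]$, $\mathbb{E}[B(s)u(s)]$, $\mathbb{E}[C(s)Z(s)]$, which are not functions of $(\alpha,\beta,\gamma)$ alone. So the compatibility obstruction is not an ODE on $(\alpha,\beta,\gamma)$ but a genuine coupling with the full law of $(Y,Z,u)$; this only strengthens your conclusion that nonemptiness cannot be taken for granted, and does not affect the rest of the argument. Also, your citation of ``Lemma~\ref{lem:sigmaphi}'' for strict convexity follows a cross-reference slip in the paper's Remark; the relevant input is the coercivity $N\ge\delta I_m$ together with the operator representation used in Lemma~\ref{lem:strictc}, exactly as you indicate parenthetically.
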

	
	Next, we present the following lemma to establish the existence and uniqueness of the control \( \tilde{u}_{\bm{\eta}}^{\bm{\lambda}}(\cdot) \) for the first part of Problem (a).
	\begin{lemma}\label{lem:p11}
		Let (H1) and (H2) hold. For any terminal state \(\xi \in L^2_{\mathcal{F}_T}(\mathbb{R}^n)\), there exists a unique control \(\tilde{u}_{\bm{\eta}}^{\bm{\lambda}}(\cdot) \in \mathcal{U}\) such that \(D_u J_{\bm{\eta}}( \tilde{u}_{\bm{\eta}}^{\bm{\lambda}}(\cdot), \bm{\lambda}(\cdot)) = 0\). Moreover, \(\tilde{u}_{\bm{\eta}}^{\bm{\lambda}}(\cdot)\) is optimal if and only if the   solution \(\big(\tilde{X}(\cdot), \tilde{Y}(\cdot), \tilde{Z}(\cdot)\big)\) to the following FBSDE: for \(s \in [0, T]\),
		\begin{equation}\label{eq:SMP}
			\left\{
			\begin{aligned}
				d \tilde Y(s)= &\; \{A(s) \tilde Y(s)+\bar{A}(s) \alpha(s)+B(s) \tilde u_{\bm{\eta}}^{\bm{\lambda}}(s)+\bar{B}(s) \gamma(s) \\
				& +C(s) \tilde Z(s)+\bar{C}(s) \beta(s)\} d s+\tilde Z(s) d W(s),  \\
				d \tilde X(s)= &\; -\{A(s) ^\top \tilde X(s)- Q(s)\tilde Y(s) - \lambda_1 (s)\} d s \\
				&-\{C(s)^\top \tilde X(s)- R(s) \tilde Z(s)-\lambda_2 (s) \} d W(s),  \\
				\tilde Y(T)= &\; \xi,\quad\quad \tilde X(0)= G\tilde Y(0),
			\end{aligned}
			\right. \\
		\end{equation}
		satisfies the following stationary condition:
		\begin{equation}\label{stationu2}
			\begin{aligned}
				N(s)	\tilde u_{\bm{\eta}}^{\bm{\lambda}}(s) -B(s)^\top \tilde X (s)+   \lambda_3(s)  = 0, \; a.e. s \in [0, T], a.s.
			\end{aligned}
		\end{equation}
	\end{lemma}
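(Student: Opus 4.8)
The plan is to recognize Lemma \ref{lem:p11} as a stochastic maximum principle for a \emph{standard} (non–mean-field) linear-quadratic BSDE control problem in which the three extended Lagrange multipliers $\lambda_1(\cdot),\lambda_2(\cdot),\lambda_3(\cdot)$ enter only as \emph{linear} perturbations of the cost. First I would set up the functional-analytic framework: regard $\mathcal{U}=L^2_{\mathbb{F}}(\mathbb{R}^m)$ as a Hilbert space. By Theorem \ref{thm31}, $u(\cdot)\mapsto(Y^u(\cdot),Z^u(\cdot))$ solving (\ref{eq:BSDE1}) is affine and continuous, with $\|(Y^u,Z^u)\|$ bounded linearly by $\|u\|$ (and $\|\xi\|$, $\|\bm{\eta}\|$). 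Since $Q,R,N,G\ge 0$ and, by (H2), $N(s)\ge\delta I_m$, the quadratic part of $\hat{J}_{\bm{\eta}}$ is convex and dominates $\delta\,\mathbb{E}\int_0^T|u|^2ds$, while the multiplier terms $2\langle\lambda_i,\cdot\rangle_{\mathbb{L}^2}$ are affine in $u$ and grow at most linearly. Hence $J_{\bm{\eta}}(\cdot,\bm{\lambda})$ is strictly convex (this is exactly the content of the Remark, via Lemma \ref{lem:sigmaphi}), coercive, and continuous on $\mathcal{U}$, so it admits a unique minimizer $\tilde{u}_{\bm{\eta}}^{\bm{\lambda}}(\cdot)$; being convex and G\^ateaux differentiable, $J_{\bm{\eta}}(\cdot,\bm{\lambda})$ has this minimizer as its unique critical point, which gives the first assertion of the lemma.

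Next I would compute $D_uJ_{\bm{\eta}}$. For $u,v\in\mathcal{U}$ and $\epsilon>0$, linearity of (\ref{eq:BSDE1}) gives $Y^{u+\epsilon v}=Y^u+\epsilon Y^1$, $Z^{u+\epsilon v}=Z^u+\epsilon Z^1$ with $(Y^1,Z^1)$ solving $dY^1=\{AY^1+Bv+CZ^1\}ds+Z^1dW$, $Y^1(T)=0$ (the $\alpha,\beta,\gamma$ terms cancel). Expanding $J_{\bm{\eta}}(u+\epsilon v,\bm{\lambda})$ in $\epsilon$, using symmetry of $G,Q,R,N$ and passing to the limit — justified by the a priori estimate of Theorem \ref{thm31} and dominated convergence — and recalling that the $\lambda_i$ are deterministic, I obtain
\[
\langle D_uJ_{\bm{\eta}}(u,\bm{\lambda}),v\rangle=2\,\mathbb{E}\Big\{\langle GY^u(0),Y^1(0)\rangle+\int_0^T\big[\langle QY^u+\lambda_1,Y^1\rangle+\langle RZ^u+\lambda_2,Z^1\rangle+\langle Nu+\lambda_3,v\rangle\big]\,ds\Big\}.
\]

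Then I would introduce the adjoint process. Given the candidate $\tilde{u}:=\tilde{u}_{\bm{\eta}}^{\bm{\lambda}}$ with state $(\tilde{Y},\tilde{Z})$, the forward SDE for $\tilde{X}$ in (\ref{eq:SMP}) is linear with bounded coefficients and inputs $Q\tilde{Y}+\lambda_1,\ R\tilde{Z}+\lambda_2\in L^2_{\mathbb{F}}(\mathbb{R}^n)$ and $G\tilde{Y}(0)\in L^2_{\cF_0}(\mathbb{R}^n)$, hence uniquely solvable in $L^{2,c}_{\mathbb{F}}(\mathbb{R}^n)$ (as in Theorem \ref{thm32}, specialized to the non–mean-field case). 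Applying It\^o's formula to $\langle\tilde{X}(s),Y^1(s)\rangle$, the cross terms $\langle A^\top\tilde{X},Y^1\rangle$ and $\langle C^\top\tilde{X},Z^1\rangle$ cancel, and using $Y^1(T)=0$, $\tilde{X}(0)=G\tilde{Y}(0)$ and taking expectations,
\[
\mathbb{E}\langle G\tilde{Y}(0),Y^1(0)\rangle=-\,\mathbb{E}\int_0^T\big[\langle Q\tilde{Y}+\lambda_1,Y^1\rangle+\langle R\tilde{Z}+\lambda_2,Z^1\rangle+\langle B^\top\tilde{X},v\rangle\big]\,ds.
\]
Substituting into the formula for $D_uJ_{\bm{\eta}}$ collapses every $Y^1,Z^1$ term and leaves
\[
\langle D_uJ_{\bm{\eta}}(\tilde{u},\bm{\lambda}),v\rangle=2\,\mathbb{E}\int_0^T\langle N(s)\tilde{u}(s)+\lambda_3(s)-B(s)^\top\tilde{X}(s),\,v(s)\rangle\,ds,\qquad v\in\mathcal{U}.
\]
Since $v$ is arbitrary, $D_uJ_{\bm{\eta}}(\tilde{u},\bm{\lambda})=0$ is equivalent to $N(s)\tilde{u}(s)+\lambda_3(s)-B(s)^\top\tilde{X}(s)=0$ a.e.\ $s\in[0,T]$, a.s., i.e.\ to (\ref{stationu2}); combined with the first paragraph (where critical point $=$ minimizer by convexity), this proves the ``if and only if''.

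I expect the only genuine subtlety to be in the first step, namely making coercivity rigorous in the presence of the \emph{indefinite} linear multiplier terms $\langle\lambda_1,\mathbb{E}Y^u\rangle_{\mathbb{L}^2}$ and $\langle\lambda_2,\mathbb{E}Z^u\rangle_{\mathbb{L}^2}$: one must combine $N(s)\ge\delta I_m$ with the linear bound $\|(Y^u,Z^u)\|\le K(\|\xi\|+\|u\|+\|\bm{\eta}\|)$ of Theorem \ref{thm31} and Young's inequality to absorb these terms, after which $J_{\bm{\eta}}(\cdot,\bm{\lambda})\ge\frac{\delta}{2}\|u\|^2-C$ and the direct method applies. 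The G\^ateaux-derivative computation and the It\^o duality identity are then routine.
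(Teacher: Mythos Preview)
Your proposal is correct and follows essentially the same route as the paper: strict convexity of $J_{\bm{\eta}}(\cdot,\bm{\lambda})$ for existence/uniqueness, the same linear variation equation $(Y^1,Z^1)$, and the same It\^o duality computation with the adjoint $\tilde X$ to reduce $D_uJ_{\bm\eta}=0$ to (\ref{stationu2}). The only cosmetic differences are that the paper invokes Mazur's theorem (via Lemma \ref{lem:strictc}) rather than coercivity and the direct method for existence, and it verifies sufficiency by an explicit quadratic expansion rather than by the ``critical point $=$ minimizer'' argument you use; both are equivalent under strict convexity.
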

	
	\begin{remark}
		Under (H2), from the stationary condition (\ref{stationu2}), we have
		$$\tilde u_{\bm{\eta}}^{\bm{\lambda}}(s)=N(s)^{-1}[B(s)^\top \tilde X(s)-\lambda_3(s)].$$
		Then, we obtain
		\begin{equation}\label{eq:stationu3}
			\mathbb{E}[\tilde u_{\bm{\eta}}^{\bm{\lambda}}(s)]=\mathbb{E}\big[N(s)^{-1}[B(s)^\top \tilde X(s)- \lambda_3(s)]\big].
		\end{equation}
		Therefore, we can get \(\lambda_3^{\bm{\eta}}(\cdot)\) admits the following relation:
		\begin{equation}\label{eq:gamma1}
			\mathbb{E}\big[ N(s) ^ {-1}[B(s)^\top \tilde X (s)- \lambda_3^{\bm{\eta}}(s)]\big] = \gamma(s).
		\end{equation}
		However, we have omitted the dependence of \(\tilde{X}(\cdot)\), \(\tilde{Y}(\cdot)\), and \(\tilde{Z}(\cdot)\)  on  \(\bm{\eta} \) and \(\bm{\lambda}\) in Lemma \ref{lem:p11}. 
		It means that the expression of \(\gamma(\cdot)\) is related to \(\bm{\eta}\) and \(\bm{\lambda}\). Therefore, we cannot directly solve the ELM \(\lambda_3^{\bm{\eta}}(\cdot)\) from the relation (\ref{eq:gamma1}).
		
	\end{remark}

	In order to determine the expressions for the optimal \( \bm{\lambda}^{\bm{\eta} }(\cdot) \), we first need to obtain the expressions for the process \( (\tilde{X}(\cdot), \tilde{Y}(\cdot), \tilde{Z}(\cdot)) \). Substituting (\ref{stationu2}) into (\ref{eq:SMP}), we obtain the following system for \( s \in [0, T] \):
	\begin{equation}\label{eq1}
		\left\{\begin{aligned}
			d \tilde Y(s)= & \{A(s) \tilde Y(s)+B(s) N(s) ^ {-1}B(s)^\top \tilde X (s) -B(s) N(s) ^ {-1} \lambda_3(s)  \\
			& +C(s)  \tilde  Z(s)+\bar{A}(s)\alpha(s)+\bar{B}(s) \gamma(s) +\bar{C}(s) \beta(s)\} d s+\tilde  Z(s) d W(s),  \\
			d \tilde X(s)= &- \{A(s) ^\top \tilde X(s)- Q(s)\tilde Y(s) -  \lambda_1 (s)\} d s \\
			&-\{C(s)^\top \tilde X(s)- R(s) \tilde Z(s)- \lambda_2 (s) \} d W(s),  \\
			\tilde  	Y(T)= & \xi, \; \tilde X(0 )=  G\tilde Y(0).
		\end{aligned}\right.
	\end{equation}
	
	\begin{lemma}\label{lem:XYZ1}
		Under (H1) and (H2), for any terminal state \(\xi \in L^2_{\mathcal{F}_T}(\mathbb{R}^n)\),  the coupled system (\ref{eq1}) admits a unique   solution $\big(\tilde X(\cdot),\tilde Y(\cdot),\tilde Z(\cdot)\big)\in \big(L^{2,c}_{\FF}(\RR ^n)\big)^2\times L^2_{\FF}(\RR ^n).$
	\end{lemma}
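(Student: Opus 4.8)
The plan is to prove well-posedness of the fully coupled forward-backward system \eqref{eq1} by a decoupling argument, reducing it to systems whose solvability is already guaranteed by Theorems \ref{thm31} and \ref{thm32}, together with a contraction-mapping (Picard iteration) step to handle the coupling. Observe first that \eqref{eq1} is \emph{linear} in $(\tilde X,\tilde Y,\tilde Z)$, with the inhomogeneous terms $\lambda_1(\cdot),\lambda_2(\cdot),\lambda_3(\cdot)\in\LL^2$ and $\al(\cdot),\be(\cdot),\ga(\cdot)\in\LL^2$ playing the role of fixed deterministic data, and $\xi\in L^2_{\cF_T}(\RR^n)$ the terminal datum; since the coefficients $A,B,C,Q,R,N^{-1}$ are bounded by (H1)--(H2) (the lower bound $N(s)\ge\de I_m$ from (H2) guarantees $N(\cdot)^{-1}\in L^\infty_\FF$), all the dependences are Lipschitz with constants depending only on $\de$, $T$, and the $L^\infty$ bounds. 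So the structure is exactly that of the optimality system for the constrained problem, and the result should follow from the same fixed-point scheme used in \cite{xiong2024mean} or \cite{sun2021linear}.

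Concretely, I would set up the following iteration. Given a guess $\tilde Y^{(k)}(\cdot)\in L^{2,c}_\FF(\RR^n)$, first solve the backward SDE for $(\tilde X^{(k+1)},\tilde Z^{(k+1)})$ — note this is genuinely a BSDE with unknown diffusion part, driven by the (now known) source $Q\tilde Y^{(k)}$, with terminal-type data replaced by the \emph{initial} condition $\tilde X(0)=G\tilde Y^{(k)}(0)$, i.e. it is a forward SDE in $\tilde X$ of the form \eqref{eq:sec3-X} (with the mean-field terms switched off), so Theorem \ref{thm32} applies and gives a unique $\tilde X^{(k+1)}\in L^{2,c}_\FF(\RR^n)$ with the a priori estimate \eqref{statees}. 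Here $\tilde Z^{(k+1)}$ is read off as the integrand appearing in the dynamics of $\tilde X^{(k+1)}$. Then, feeding $\tilde X^{(k+1)}$ and $\tilde Z^{(k+1)}$ into the first equation of \eqref{eq1}, solve the BSDE for $\tilde Y^{(k+1)}$ with terminal condition $\tilde Y(T)=\xi$: this is exactly a state equation of the form \eqref{state1} (mean-field terms off, driver linear in $\tilde Y$ with bounded coefficient $A$ and square-integrable forcing $BN^{-1}B^\top\tilde X^{(k+1)}+C\tilde Z^{(k+1)}+\cdots$), so Theorem \ref{thm31} gives a unique $(\tilde Y^{(k+1)},\text{(its }Z\text{)})\in L^{2,c}_\FF(\RR^n)\times L^2_\FF(\RR^n)$ with the estimate in Theorem \ref{thm31}. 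One must be slightly careful that the $\tilde Z$ produced by the $\tilde X$-equation and the $Z$-component produced by the $\tilde Y$-equation are reconciled; this is handled by showing the map has a fixed point, at which both dynamics hold simultaneously with a single pair $(\tilde Y,\tilde Z)$.

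The contraction estimate is where the real work lies. Writing $\de\tilde Y^{(k)}=\tilde Y^{(k+1)}-\tilde Y^{(k)}$, etc., the differences satisfy the same system with zero data, and I would estimate $\mathbb{E}\sup_{[0,T]}|\de\tilde X^{(k+1)}|^2$ in terms of $\mathbb{E}\int_0^T|\de\tilde Y^{(k)}|^2\,ds$ via \eqref{statees}, then $\mathbb{E}\sup_{[0,T]}|\de\tilde Y^{(k+1)}|^2+\mathbb{E}\int_0^T|\de\tilde Z^{(k+1)}|^2\,ds$ in terms of $\mathbb{E}\int_0^T(|\de\tilde X^{(k+1)}|^2+|\de\tilde Z^{(k+1)}|^2)\,ds$ via Theorem \ref{thm31}'s estimate. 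The obstacle is the usual one for coupled FBSDEs: a naive composition of these two estimates produces a constant $K^2$ that need not be $<1$, so a plain Banach fixed point on $[0,T]$ fails for large $T$. The standard remedy — and the step I expect to be the crux — is to first prove the contraction on a short interval $[T-h,T]$ with $h$ small enough (depending only on $\de$ and the $L^\infty$ bounds of the coefficients, \emph{not} on the data) that the composed Lipschitz constant is $<1$, obtain existence and uniqueness there, and then patch finitely many such intervals together, using at each stage the already-constructed solution on the later interval to supply the terminal/initial data for the next one. Uniqueness on $[0,T]$ then follows from uniqueness on each subinterval, and the global regularity $(\tilde X,\tilde Y,\tilde Z)\in(L^{2,c}_\FF(\RR^n))^2\times L^2_\FF(\RR^n)$ is inherited from Theorems \ref{thm31} and \ref{thm32} on each piece. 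Alternatively, if one prefers to avoid the subdivision, a weighted norm $\mathbb{E}\int_0^T e^{\mu s}|\cdot|^2\,ds$ with $\mu$ large can absorb the bad constant directly; I would mention this as the cleaner route and carry it out, deferring to \cite{xiong2024mean,sun2021linear} for the routine details of the a priori bounds.
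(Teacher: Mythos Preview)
There is a genuine gap in your plan, and it concerns precisely the step you flag as ``the crux.'' The short-time contraction followed by finitely many patches works for a pure BSDE or a pure forward SDE, but it does \emph{not} work for a fully coupled FBSDE with two-point boundary data such as \eqref{eq1}: to solve on $[T-h,T]$ you would need the value $\tilde X(T-h)$ as initial datum for the forward component, yet $\tilde X(T-h)$ is determined by the dynamics on $[0,T-h]$ starting from $\tilde X(0)=G\tilde Y(0)$, and $\tilde Y(0)$ is what you are trying to compute. The forward and backward boundary conditions are entangled across the whole interval, so the subdivision argument collapses. The weighted-norm variant has the same structural obstruction; it does not separate the two boundary conditions. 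A secondary confusion: in your iteration you propose to ``read off $\tilde Z^{(k+1)}$ as the integrand appearing in the dynamics of $\tilde X^{(k+1)}$,'' but $\tilde Z$ is the martingale integrand of the \emph{backward} component $\tilde Y$, not of $\tilde X$. The diffusion of $\tilde X$ is $-(C^\top\tilde X-R\tilde Z-\lambda_2)$, which \emph{uses} $\tilde Z$ as input; given only $\tilde Y^{(k)}$, you cannot solve the forward equation for $\tilde X^{(k+1)}$ at all. A consistent Picard map would have to take $(\tilde Y^{(k)},\tilde Z^{(k)})\mapsto\tilde X^{(k+1)}\mapsto(\tilde Y^{(k+1)},\tilde Z^{(k+1)})$, and then you are back to the global-in-time difficulty above.

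The paper bypasses all of this by observing that \eqref{eq1} is nothing other than the optimality system of the strictly convex functional $u\mapsto J_{\bm\eta}(u,\bm\lambda)$, obtained by substituting the stationarity condition \eqref{stationu2} into \eqref{eq:SMP}. By Lemma~\ref{lem:p11} there is a unique minimizer $\tilde u_{\bm\eta}^{\bm\lambda}$; for this $\tilde u_{\bm\eta}^{\bm\lambda}$ the system \eqref{eq:SMP} is \emph{decoupled} (first solve the BSDE for $(\tilde Y,\tilde Z)$ via Theorem~\ref{thm31}, then the linear forward SDE for $\tilde X$), and the stationarity condition holds, so the resulting $(\tilde X,\tilde Y,\tilde Z)$ solves \eqref{eq1}. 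Conversely, any solution of \eqref{eq1} produces, via $\tilde u:=N^{-1}(B^\top\tilde X-\lambda_3)$, a control satisfying \eqref{eq:SMP} and \eqref{stationu2}, hence the unique optimal one; uniqueness of $(\tilde X,\tilde Y,\tilde Z)$ then follows from the uniqueness in Theorems~\ref{thm31} and~\ref{thm32}. This is exactly the argument of Lemma~2.4 in \cite{xiong2024mean} to which the paper defers. If you want a direct PDE-free route that does not pass through the control problem, the correct tool is the method of continuation under monotonicity (which (H2) supplies), not a naive contraction.
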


	We have established the existence and uniqueness of \( \tilde{u}^{\bm{\lambda}}_{\bm{\eta}}(\cdot) \) for the first part of Problem (a). Based on the uniqueness of the solution of the system  (\ref{eq1}), we can express the expected value  of the processes \( (\tilde{Y}(\cdot), \tilde{Z}(\cdot)) \) by some linear operators \( \mathcal{P}_{ij} \) and  \( \mathcal{L}_{il} \) , where \(\mathcal{P}_{i0} \) maps from \(L^2_{\mathcal{F}_T} \) to \(\mathbb{L}^2\), and the others operators map from \(\mathbb{L}^2\) to \(\mathbb{L}^2\). Here, \(i=\{ 0, 1, 2\}\), \(j =\{0,1,2,3\}\) , and \(l=\{1,2,3\}\). Additionally, based on (\ref{eq:stationu3}), the expectation of \( \tilde{u}^{\bm{\lambda}}_{\bm{\eta}}(\cdot) \) can also be expressed using these operators. Thus, for all \( s \in [0, T] \),
	\begin{equation*}
		\left\{\begin{aligned}
			\mathbb{E}\tilde{Y}(s) =& \big(\mathcal{P} _{00}\xi + \mathcal{P} _{01}\alpha +  \mathcal{P} _{02}\beta+ \mathcal{P} _{03}\gamma+ \mathcal{L} _{01} \lambda_1+ \mathcal{L} _{02} \lambda_2+ \mathcal{L} _{03} \lambda_3\big)(s) \\
			\mathbb{E}\tilde{Z}(s) = &  \big(\mathcal{P} _{10}\xi + \mathcal{P} _{11}\alpha+  \mathcal{P} _{12}\beta+ \mathcal{P} _{13}\gamma+ \mathcal{L} _{11} \lambda_1 + \mathcal{L} _{12} \lambda_2+ \mathcal{L} _{13} \lambda_3\big)(s)   \\
			\mathbb{E}\tilde{u}^{\bm{\lambda}}_{\bm{\eta}}(s) =&    \big(\mathcal{P} _{20}\xi+ \mathcal{P} _{21}\alpha+  \mathcal{P} _{22}\beta+ \mathcal{P} _{23}\gamma+ \mathcal{L} _{21} \lambda_1 + \mathcal{L} _{22} \lambda _2+ \mathcal{L} _{23} \lambda_3\big)(s).
		\end{aligned}\right.
	\end{equation*}
	Therefore, the optimal \( \bm{\lambda}^{\bm{\eta} }(\cdot) \) satisfies  the following condition:
	\begin{equation}\label{eq:operators}
		\left\{\begin{aligned}
			\alpha(s)=& \big(\mathcal{P} _{00}\xi + \mathcal{P} _{01}\alpha +  \mathcal{P} _{02}\beta+ \mathcal{P} _{03}\gamma+ \mathcal{L} _{01} \lambda_1+ \mathcal{L} _{02} \lambda_2+ \mathcal{L} _{03} \lambda_3\big)(s) \\
			\beta(s)= &  \big(\mathcal{P} _{10}\xi + \mathcal{P} _{11}\alpha+  \mathcal{P} _{12}\beta+ \mathcal{P} _{13}\gamma+ \mathcal{L} _{11} \lambda_1 + \mathcal{L} _{12} \lambda_2+ \mathcal{L} _{13} \lambda_3\big)(s)   \\
			\gamma(s)=&    \big(\mathcal{P} _{20}\xi+ \mathcal{P} _{21}\alpha+  \mathcal{P} _{22}\beta+ \mathcal{P} _{23}\gamma+ \mathcal{L} _{21} \lambda_1 + \mathcal{L} _{22} \lambda _2+ \mathcal{L} _{23} \lambda_3\big)(s).
		\end{aligned}\right.
	\end{equation}
	which can be written as 
	\begin{equation}
		\begin{aligned}
			\bm{\eta}^\top=\mathcal{P} (\xi,\bm{\eta})^\top+\mathcal{L}\bm{\lambda}^\top,
		\end{aligned}
	\end{equation}
	where
	\(
	\mathcal{P}=\begin{pmatrix}
		\mathcal{P}_{00} & \mathcal{P}_{01} &\mathcal{P}_{02}&\mathcal{P}_{03}\\
		\mathcal{P}_{10}&\mathcal{P}_{11}&\mathcal{P}_{12}&\mathcal{P}_{13}\\
		\mathcal{P}_{20}&\mathcal{P}_{21}&\mathcal{P}_{22}&\mathcal{P}_{23}\\
	\end{pmatrix}
	\)
	and 
	\(
	\mathcal{L}=\begin{pmatrix}
		\mathcal{L}_{01}&\mathcal{L}_{02}&\mathcal{L}_{03}\\
		\mathcal{L}_{11}&\mathcal{L}_{12}&\mathcal{L}_{13}\\
		\mathcal{L}_{21}&\mathcal{L}_{22}&\mathcal{L}_{23}\\
	\end{pmatrix}.
	\)

	If we can uniquely solve \( \bm{\lambda}(\cdot) \) from (\ref{eq:operators}) in terms of \( \bm{\eta}(\cdot) \). Then the  problem reduces to a BSDE control problem with deterministic control variables \( \bm{\eta}(\cdot) \). While it remains unclear whether (\ref{eq:operators}) admits a unique solution, inspired by Lemma 2.6 in Xiong and Xu \cite{xiong2024mean}, we  present a lemma to demonstrate that, if  \(\bm{\lambda}(\cdot) \) is a solution to (\ref{eq:operators}), the control \( \tilde{u}^{\bm{\lambda}}_{\bm{\eta}}(\cdot) \) coincides with \( u^\ast_{ \bm{\eta} }(\cdot) \).

	\begin{lemma}\label{lem:uu}
		If \(\bm{\lambda}(\cdot)  \in (\mathbb{L}^2)^3\)  satisfies the condition (\ref{eq:operators}),   then \(\tilde{u}^{\bm{\lambda}}_{\bm{\eta}}(\cdot)=u^\ast_{\bm{\eta}}(\cdot)\).
	\end{lemma}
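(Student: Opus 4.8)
The plan is to exploit the strict convexity of $\hat J_{\bm\eta}$ together with the characterizations already established. Recall that Lemma \ref{lem:uabg} gives a unique minimizer $u^\ast_{\bm\eta}(\cdot)$ of $\hat J_{\bm\eta}$ over the constrained class $\{u\in\mathcal U:\mathbb E Y^u=\alpha,\ \mathbb E Z^u=\beta,\ \mathbb E u=\gamma\}$, while Lemma \ref{lem:p11} produces $\tilde u^{\bm\lambda}_{\bm\eta}(\cdot)$ as the unique critical point of the relaxed (Lagrangian) functional $J_{\bm\eta}(\cdot,\bm\lambda)$ with a given $\bm\lambda$. The bridge is that when $\bm\lambda$ solves (\ref{eq:operators}), the candidate $\tilde u^{\bm\lambda}_{\bm\eta}(\cdot)$ actually satisfies all three constraints $\mathbb E Y^{\tilde u}=\alpha$, $\mathbb E Z^{\tilde u}=\beta$, $\mathbb E \tilde u=\gamma$ — this is precisely what the operator identities in (\ref{eq:operators}) encode, since those identities were obtained by writing out $\mathbb E\tilde Y$, $\mathbb E\tilde Z$, $\mathbb E\tilde u^{\bm\lambda}_{\bm\eta}$ via the operators $\mathcal P_{ij},\mathcal L_{il}$ and demanding they equal $\alpha,\beta,\gamma$ respectively.

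Granting that, I would argue as follows. First, observe that for any admissible $u(\cdot)$ satisfying the constraints, the three penalty inner products in (\ref{eq:j-abg1}) vanish, so $J_{\bm\eta}(u,\bm\lambda)=\hat J_{\bm\eta}(u)$ on the constrained set; in particular $J_{\bm\eta}(\tilde u^{\bm\lambda}_{\bm\eta},\bm\lambda)=\hat J_{\bm\eta}(\tilde u^{\bm\lambda}_{\bm\eta})$ and $J_{\bm\eta}(u^\ast_{\bm\eta},\bm\lambda)=\hat J_{\bm\eta}(u^\ast_{\bm\eta})$, using that both controls meet the constraints (for $\tilde u$ this is the point of the previous paragraph; for $u^\ast_{\bm\eta}$ it is part of Lemma \ref{lem:uabg}). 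Next, since $\tilde u^{\bm\lambda}_{\bm\eta}(\cdot)$ is the critical point of the strictly convex map $u\mapsto J_{\bm\eta}(u,\bm\lambda)$ (strict convexity is the Remark following Problem (a), inherited from Lemma \ref{lem:sigmaphi}), it is the \emph{global} minimizer of $J_{\bm\eta}(\cdot,\bm\lambda)$ over all of $\mathcal U$. Hence
\[
\hat J_{\bm\eta}(\tilde u^{\bm\lambda}_{\bm\eta})=J_{\bm\eta}(\tilde u^{\bm\lambda}_{\bm\eta},\bm\lambda)\le J_{\bm\eta}(u^\ast_{\bm\eta},\bm\lambda)=\hat J_{\bm\eta}(u^\ast_{\bm\eta}).
\]
But $\tilde u^{\bm\lambda}_{\bm\eta}$ is itself admissible for the constrained problem, so by the \emph{minimality} of $u^\ast_{\bm\eta}$ among constrained controls the reverse inequality $\hat J_{\bm\eta}(u^\ast_{\bm\eta})\le\hat J_{\bm\eta}(\tilde u^{\bm\lambda}_{\bm\eta})$ holds. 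Therefore $\hat J_{\bm\eta}(\tilde u^{\bm\lambda}_{\bm\eta})=\hat J_{\bm\eta}(u^\ast_{\bm\eta})$, i.e. $\tilde u^{\bm\lambda}_{\bm\eta}$ is also a constrained minimizer, and by the \emph{uniqueness} clause of Lemma \ref{lem:uabg} we conclude $\tilde u^{\bm\lambda}_{\bm\eta}(\cdot)=u^\ast_{\bm\eta}(\cdot)$.

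The step I expect to be the main obstacle is justifying cleanly that a solution $\bm\lambda$ of (\ref{eq:operators}) forces $\tilde u^{\bm\lambda}_{\bm\eta}$ to satisfy the three constraints. The operator identities (\ref{eq:operators}) were derived from the representation of $\mathbb E\tilde Y$, $\mathbb E\tilde Z$, $\mathbb E\tilde u^{\bm\lambda}_{\bm\eta}$ in terms of $(\xi,\alpha,\beta,\gamma,\lambda_1,\lambda_2,\lambda_3)$ coming from the unique solvability of system (\ref{eq1}) (Lemma \ref{lem:XYZ1}); so one must carefully match: the first line of (\ref{eq:operators}) with $\mathbb E\tilde Y(s)$, hence $\mathbb E Y^{\tilde u^{\bm\lambda}_{\bm\eta}}(s)=\alpha(s)$, and similarly for $\beta$ and $\gamma$. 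This is bookkeeping with the definitions of $\mathcal P_{ij},\mathcal L_{il}$ rather than a deep point, but it must be spelled out, since everything downstream (including that the penalty terms vanish) rests on it. A secondary point to handle with care is that strict convexity gives that a critical point of $J_{\bm\eta}(\cdot,\bm\lambda)$ is a global minimizer over $\mathcal U$ — one should note that $J_{\bm\eta}(\cdot,\bm\lambda)$ is a convex quadratic functional (the Lagrange terms are linear in $u$, hence preserve strict convexity), so vanishing of the Gâteaux derivative at $\tilde u^{\bm\lambda}_{\bm\eta}$ indeed yields the global minimum.
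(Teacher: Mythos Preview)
Your argument is correct and is precisely the standard Lagrangian--duality reasoning one expects here: feasibility of $\tilde u^{\bm\lambda}_{\bm\eta}$ from (\ref{eq:operators}), global minimality of $\tilde u^{\bm\lambda}_{\bm\eta}$ for $J_{\bm\eta}(\cdot,\bm\lambda)$ by strict convexity, comparison with $u^\ast_{\bm\eta}$ on the constrained set where the penalty terms vanish, and uniqueness from Lemma~\ref{lem:uabg}. The paper itself omits the proof of Lemma~\ref{lem:uu} and simply refers to Lemma~2.6 of \cite{xiong2024mean} as analogous, so your write-up matches the intended route; the two points you flag (that (\ref{eq:operators}) encodes exactly $\mathbb E\tilde Y=\alpha$, $\mathbb E\tilde Z=\beta$, $\mathbb E\tilde u=\gamma$, and that the affine Lagrange terms preserve strict convexity so the critical point is a global minimizer) are indeed the only things to check and both are straightforward.
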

	
	Now  the Problem (a) has been solved, we proceed to address the second step of solving Problem (MF-BSLQ).
	
	\noindent\textbf{Step 2:} In this step, we will derive the expressions for the optimal \(\bm{\eta}^*(\cdot) = (\alpha^\ast(\cdot), \beta^\ast(\cdot), \gamma^\ast(\cdot) )\). Based on these considerations, the state equation considered in this step is as follows: for all \( s \in [0, T] \),
	\begin{equation}\label{eq:stateabg}
		\left\{\begin{aligned}
			d Y(s)= &\;\; \{A(s) Y(s)+\bar{A}(s)\alpha(s)+B(s) u^\ast_{\bm{\eta}}(s)+\bar{B}(s) \gamma(s) \\
			& +C(s) Z(s)+\bar{C}(s) \beta(s)\} d s+Z(s) d W(s), \\
			Y(T)= &\;\; \xi;
		\end{aligned}\right.
	\end{equation}
	and the cost functional related to \(\bm{\eta}(\cdot)\) is given by
	\begin{equation}\label{eq:costabg}
		\begin{aligned}
			\hat{J}_{\bm{\eta}}(u^\ast_{\bm{\eta}}(\cdot)) = \mathbb{E}\Big\{& \langle G Y(0), Y(0) \rangle 
			+ \int_0^T \Big[ \langle Q(s) Y(s), Y(s) \rangle + \langle \bar{Q}(s) \alpha(s), \alpha(s) \rangle \\
			& + \langle R(s) Z(s), Z(s) \rangle + \langle \bar{R}(s) \beta(s), \beta(s) \rangle + \langle N(s) u^\ast_{\bm{\eta}}(s), u^\ast_{\bm{\eta}}(s) \rangle\\
			& + \langle \bar{N}(s) \gamma(s), \gamma(s) \rangle \Big] ds \Big\}.
		\end{aligned}
	\end{equation} 
	
	And the problem we consider in this step is as follows:
	
	\noindent\textbf{Problem (b):} \text{ Find } \(\bm{\eta}^*(\cdot)\in (\mathbb{L}^2)^3\) such that
	\[
	\hat{J}_{\bm{\eta}^*}(u^\ast_{\bm{\eta}^*}(\cdot)) = \inf_{\bm{\eta}(\cdot)\in(\mathbb{L}^2)^3} \hat{J}_{\bm{\eta}}(u^\ast_{\bm{\eta}}(\cdot)).
	\]

	Based on Lemmas \ref{lem:uabg}, \ref{lem:XYZ1}, and \ref{lem:uu}, we can express \(u^\ast_{\bm{\eta}}(\cdot)\) as a linear combination of the operators \(\cO_{i2}\) for \(i \in \{1, 2, 3, 4\}\), acting on \(\xi\), \(\alpha\), \(\beta\), and \(\gamma\), respectively, as follows
	\begin{equation}\label{eq:uastoperator}
		u^\ast_{\bm{\eta}}(\cdot)=(\cO_{21}\xi)(\cdot)+(\cO_{22} \alpha)(\cdot)+(\cO_{23} \beta)(\cdot)+(\cO_{24}\gamma)(\cdot).
	\end{equation}
	
	Therefore, based on the state equation (\ref{eq:stateabg}), the state process can also be expressed as a linear combination, namely:
	\begin{equation}\label{eq:YZoperator}
		\left\{
		\begin{aligned}
			Y(\cdot)&=(\mathcal{O}_{01}\xi)(\cdot)+(\mathcal{O}_{02}\alpha)(\cdot)+(\mathcal{O}_{03} \beta)(\cdot)+(\mathcal{O}_{04} \gamma)(\cdot),\\
			Z(\cdot)&=(\mathcal{O}_{11}\xi)(\cdot)+(\mathcal{O}_{12}\alpha)(\cdot)+(\mathcal{O}_{13} \beta)(\cdot)+(\mathcal{O}_{14} \gamma)(\cdot),
		\end{aligned}
		\right.
	\end{equation}
	and 
	\begin{equation}\label{eq:YZoperator1}
		\begin{aligned}
			Y(0)&=\mathcal{O}_{31}\xi+\mathcal{O}_{32}\alpha+\mathcal{O}_{33} \beta+\mathcal{O}_{34}\gamma,\\
		\end{aligned}
	\end{equation}

	Based on the previous results, we now state the following lemma, which provides the necessary and sufficient conditions for the optimal \( \bm{\eta}^*(\cdot)  \) of Problem (b).
	\begin{lemma}\label{lem:conditions}
		Suppose that Assumptions (H1) and (H2) hold. Then, the necessary and sufficient conditions for  \( \bm{\eta}^*(\cdot)  \)  to be optimal for Problem (b) are
	\begin{align}\label{eq:conditions}
		\mathbf{I}_0\big(\mathcal{O}^\top\mathcal{T}\mathcal{O}+\mathcal{W}\big)(\xi,\bm{\eta}^\ast)^\top=(0,0,0,0)^\top,
	\end{align}
	where \(\mathbf{I}_0= 
	\begin{pmatrix}
		0&0&0&0\\
		0&1&0&0\\
		0&0&1&0\\
		0&0&0&1
	\end{pmatrix}\), \(\mathcal{O}=
	\begin{pmatrix}
		\mathcal{O}_{01}&\mathcal{O}_{02}&\mathcal{O}_{03}&\mathcal{O}_{04}\\
		\mathcal{O}_{11}&\mathcal{O}_{12}&\mathcal{O}_{13}&\mathcal{O}_{14}\\
		\mathcal{O}_{21}&\mathcal{O}_{22}&\mathcal{O}_{23}&\mathcal{O}_{24}\\
		\mathcal{O}_{31}&\mathcal{O}_{32}&\mathcal{O}_{33}&\mathcal{O}_{34}
	\end{pmatrix}
	\),
	\(\mathcal{T}=
	\begin{pmatrix}
		Q&0&0&0\\
		0&R&0&0\\
		0&0&N&0\\
		0&0&0&G
	\end{pmatrix}\), and \(\mathcal{W}=
	\begin{pmatrix}
		0&0&0&0\\
		0&\bar{Q}&0&0\\
		0&0&\bar{R}&0\\
		0&0&0&\bar{N}
	\end{pmatrix}\).
	
\end{lemma}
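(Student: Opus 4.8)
\textbf{Proof proposal for Lemma \ref{lem:conditions}.}
The plan is to reduce Problem (b) to an unconstrained quadratic minimization over the Hilbert space \((\mathbb{L}^2)^3\) and then apply the standard first-order optimality condition for a convex quadratic functional. First I would substitute the operator representations (\ref{eq:uastoperator}), (\ref{eq:YZoperator}) and (\ref{eq:YZoperator1}) into the cost functional (\ref{eq:costabg}). Writing \(\bm{\zeta} = (\xi,\alpha,\beta,\gamma)^\top\), each of the six quadratic terms in \(\hat J_{\bm{\eta}}(u^\ast_{\bm{\eta}})\) becomes a quadratic form in \(\bm{\zeta}\): the \(Y\)-term contributes \(\langle Q\,\mathcal{O}_{0\cdot}\bm{\zeta}, \mathcal{O}_{0\cdot}\bm{\zeta}\rangle\), the \(Z\)-term \(\langle R\,\mathcal{O}_{1\cdot}\bm{\zeta}, \mathcal{O}_{1\cdot}\bm{\zeta}\rangle\), the \(u^\ast\)-term \(\langle N\,\mathcal{O}_{2\cdot}\bm{\zeta}, \mathcal{O}_{2\cdot}\bm{\zeta}\rangle\), the \(Y(0)\)-term \(\langle G\,\mathcal{O}_{3\cdot}\bm{\zeta}, \mathcal{O}_{3\cdot}\bm{\zeta}\rangle\), and the three mean-field terms \(\langle\bar Q\alpha,\alpha\rangle + \langle\bar R\beta,\beta\rangle + \langle\bar N\gamma,\gamma\rangle\). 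Collecting the first four, one gets exactly \(\langle \mathcal{T}\mathcal{O}\bm{\zeta}, \mathcal{O}\bm{\zeta}\rangle = \langle \mathcal{O}^\top\mathcal{T}\mathcal{O}\bm{\zeta}, \bm{\zeta}\rangle\) (after taking expectations, with the \(\mathbb{L}^2\) inner product on the deterministic outputs), and the three mean-field terms assemble into \(\langle \mathcal{W}\bm{\zeta}, \bm{\zeta}\rangle\) because \(\mathcal{W}\) annihilates the \(\xi\)-slot. Hence \(\hat J_{\bm{\eta}}(u^\ast_{\bm{\eta}}) = \big\langle (\mathcal{O}^\top\mathcal{T}\mathcal{O} + \mathcal{W})\bm{\zeta},\bm{\zeta}\big\rangle\), a quadratic functional of \(\bm{\eta}\) with a \(\xi\)-dependent affine part.

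Next I would record the convexity structure. The operator \(\mathcal{O}^\top\mathcal{T}\mathcal{O} + \mathcal{W}\) is self-adjoint and nonnegative since \(\mathcal{T}\) and \(\mathcal{W}\) are built from the nonnegative matrices in (H2); moreover \(N \geq \delta I_m\) together with the fact that \(u^\ast_{\bm{\eta}}\) genuinely depends on \(\gamma\) (through \(\mathcal{O}_{24}\)) and \(Y,Z\) depend on \(\alpha,\beta\) forces a uniform positivity in the \(\bm{\eta}\)-directions, so \(\bm{\eta}\mapsto \hat J_{\bm{\eta}}(u^\ast_{\bm{\eta}})\) is strictly convex and coercive on \((\mathbb{L}^2)^3\). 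Therefore a minimizer exists, is unique, and is characterized by the vanishing of the Gâteaux derivative. Differentiating the quadratic form in the directions \((0,\delta\alpha,\delta\beta,\delta\gamma)\) — i.e. freezing the \(\xi\)-slot — yields the condition that the \(\alpha,\beta,\gamma\) components of \(2(\mathcal{O}^\top\mathcal{T}\mathcal{O} + \mathcal{W})\bm{\zeta}\) vanish, which is precisely \(\mathbf{I}_0(\mathcal{O}^\top\mathcal{T}\mathcal{O} + \mathcal{W})(\xi,\bm{\eta}^\ast)^\top = (0,0,0,0)^\top\) after dropping the harmless factor \(2\); the projection \(\mathbf{I}_0\) encodes exactly that only the last three coordinates are free variables while \(\xi\) is fixed data.

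The one point requiring genuine care — and the step I expect to be the main obstacle — is justifying that the operator representations (\ref{eq:uastoperator})–(\ref{eq:YZoperator1}) are in fact \emph{linear and bounded} on the relevant \(\mathbb{L}^2\) spaces, so that the substitution into \(\hat J_{\bm{\eta}}\) produces a legitimate bounded quadratic form to which Hilbert-space convex optimization applies. This boundedness follows by tracing through Lemmas \ref{lem:uabg}, \ref{lem:XYZ1} and \ref{lem:uu}: the solution map of the coupled system (\ref{eq1}) is linear in the data \((\xi,\alpha,\beta,\gamma,\lambda_1,\lambda_2,\lambda_3)\) by uniqueness, the \emph{a priori} estimates from Theorems \ref{thm31} and \ref{thm32} give the bounds, and — crucially — one must verify that solving (\ref{eq:operators}) for \(\bm{\lambda}^{\bm{\eta}}\) in terms of \(\bm{\eta}\) (guaranteed only implicitly via Lemma \ref{lem:uu}) does not destroy linearity; here the identification \(\tilde u^{\bm{\lambda}}_{\bm{\eta}} = u^\ast_{\bm{\eta}}\) lets us bypass inverting \(\mathcal{L}\) directly and instead read off the \(\mathcal{O}_{ij}\) from the (linear, well-posed) constrained optimal control \(u^\ast_{\bm{\eta}}\). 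Once linearity and boundedness are in hand, taking expectations to pass from the random state processes to the deterministic quantities \(\alpha,\beta,\gamma\) and assembling the block matrices \(\mathcal{O},\mathcal{T},\mathcal{W}\) is bookkeeping, and the necessary-and-sufficient claim is then immediate from the strict convexity established above.
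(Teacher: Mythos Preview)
Your proposal is correct and follows essentially the same approach as the paper: substitute the linear operator representations (\ref{eq:uastoperator})--(\ref{eq:YZoperator1}) into the cost functional (\ref{eq:costabg}), obtain a quadratic form in \((\xi,\bm{\eta})\), and apply first-order optimality conditions in the \(\bm{\eta}\)-variables. The paper simply expands all cross terms \(\langle \mathcal{O}_{ik}^\ast \cdot \mathcal{O}_{jl}\cdot,\cdot\rangle_{\mathbb{L}^2}\) explicitly before repackaging them in block-matrix form, whereas you write the block form \(\langle(\mathcal{O}^\top\mathcal{T}\mathcal{O}+\mathcal{W})\bm{\zeta},\bm{\zeta}\rangle\) directly; this is purely a presentational difference. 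Your attention to strict convexity for the sufficiency direction and to the boundedness of the \(\mathcal{O}_{ij}\) is in fact more careful than the paper's own proof, which invokes ``first-order conditions'' and the matrix identity without further justification.
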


Now we are ready to present the main theorem of this article.
\begin{theorem}\label{main}
	Let Assumptions (H1) and (H2) hold. Then, the unique optimal control \( u^\ast(\cdot) \) for Problem (MF-BSLQ) is given by (\ref{stationu2}) with $\tilde{X}(\cdot)$ replaced by $X^*(\cdot)$.  Moreover, \( (X^\ast(\cdot), Y^\ast(\cdot), Z^\ast(\cdot)) \) is the unique solution of the FBSDE (\ref{eq:SMP}) with \( \alpha(\cdot) \), \( \beta(\cdot) \), and \( \gamma(\cdot) \) replaced by \( \alpha^\ast(\cdot) \), \( \beta^\ast(\cdot) \), and \( \gamma^\ast(\cdot) \), respectively. The Lagrange multipliers \( \lambda_1(\cdot) \), \( \lambda_2(\cdot) \), and \( \lambda_3(\cdot) \) are obtained by solving (\ref{eq:operators}) with \( \alpha(\cdot) \), \( \beta(\cdot) \), and \( \gamma(\cdot) \) replaced by their optimal counterparts \( \alpha^\ast(\cdot) \), \( \beta^\ast(\cdot) \), and \( \gamma^\ast(\cdot) \). Furthermore, the optimal deterministic processes \( \alpha^\ast(\cdot) \), \( \beta^\ast(\cdot) \), and \( \gamma^\ast(\cdot) \) are obtained by solving (\ref{eq:conditions}).
\end{theorem}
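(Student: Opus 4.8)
The plan is to read Theorem~\ref{main} as the assembly of Lemmas~\ref{lem:uabg}--\ref{lem:conditions}, organised around the identity
$\inf_{u\in\mathcal U}J(u(\cdot))=\inf_{\bm\eta\in(\mathbb L^2)^3}\hat J_{\bm\eta}(u^\ast_{\bm\eta}(\cdot))$,
which follows from the displayed ``straightforward to verify'' equality together with Lemma~\ref{lem:uabg} (the inner, constrained infimum over $\mathcal U$ is attained at the unique $u^\ast_{\bm\eta}$, which moreover satisfies $\mathbb E[Y^{u^\ast_{\bm\eta}}]=\alpha$, $\mathbb E[Z^{u^\ast_{\bm\eta}}]=\beta$, $\mathbb E[u^\ast_{\bm\eta}]=\gamma$, so the constrained value equals $\hat J_{\bm\eta}(u^\ast_{\bm\eta})$). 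It then suffices to (i) run the Problem~(a) machinery for a generic $\bm\eta$ to obtain a computable description of $u^\ast_{\bm\eta}$; (ii) run the Problem~(b) machinery to select the minimising $\bm\eta^\ast$; and (iii) check that the two steps glue together and recover the optimal control of Theorem~\ref{preSMP}.

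For (i), fix $\bm\eta$. If a triple $\bm\lambda^{\bm\eta}\in(\mathbb L^2)^3$ solving the operator system~(\ref{eq:operators}) is available, then Lemma~\ref{lem:uu} identifies $\tilde u^{\bm\lambda^{\bm\eta}}_{\bm\eta}=u^\ast_{\bm\eta}$, and Lemma~\ref{lem:p11} (using Lemma~\ref{lem:XYZ1} for the well-posedness of~(\ref{eq1})) characterises $u^\ast_{\bm\eta}$ through the FBSDE~(\ref{eq:SMP}) and the stationary condition~(\ref{stationu2}), i.e.\ $u^\ast_{\bm\eta}(s)=N(s)^{-1}\big(B(s)^\top\tilde X(s)-\lambda_3^{\bm\eta}(s)\big)$. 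For (ii), $\bm\eta\mapsto\hat J_{\bm\eta}(u^\ast_{\bm\eta})$ is (strictly) convex on $(\mathbb L^2)^3$, and an optimal $\bm\eta^\ast=(\alpha^\ast,\beta^\ast,\gamma^\ast)$ exists; the cleanest route to existence is to start from the optimal control $u^\ast$ furnished by Theorem~\ref{preSMP}, set $\alpha^\ast=\mathbb E[Y^\ast]$, $\beta^\ast=\mathbb E[Z^\ast]$, $\gamma^\ast=\mathbb E[u^\ast]$, and observe that the nested-infimum identity forces $u^\ast=u^\ast_{\bm\eta^\ast}$ and $\hat J_{\bm\eta^\ast}(u^\ast_{\bm\eta^\ast})=\inf_uJ(u)$, so $\bm\eta^\ast$ solves Problem~(b); Lemma~\ref{lem:conditions} then says $\bm\eta^\ast$ is exactly the solution of~(\ref{eq:conditions}). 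Applying (i) at $\bm\eta=\bm\eta^\ast$ and writing $u^\ast:=u^\ast_{\bm\eta^\ast}$ yields the asserted representation of $u^\ast$ by~(\ref{stationu2}) with $\tilde X$ replaced by $X^\ast$, with $(X^\ast,Y^\ast,Z^\ast)$ the unique solution of~(\ref{eq:SMP}) at $\alpha^\ast,\beta^\ast,\gamma^\ast$, and with $\bm\lambda^{\bm\eta^\ast}$ a solution of~(\ref{eq:operators}) at $\alpha^\ast,\beta^\ast,\gamma^\ast$.

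The gluing step~(iii) is where the actual verification happens. With $\bm\eta=\bm\eta^\ast$ and the deterministic choices $\lambda_1(s)=\mathbb E[\bar{Q}(s)]\mathbb E[Y^\ast(s)]-\mathbb E[\bar{A}(s)^\top X^\ast(s)]$, $\lambda_2(s)=\mathbb E[\bar{R}(s)]\mathbb E[Z^\ast(s)]-\mathbb E[\bar{C}(s)^\top X^\ast(s)]$, $\lambda_3(s)=\mathbb E[\bar{N}(s)]\mathbb E[u^\ast(s)]-\mathbb E[\bar{B}(s)^\top X^\ast(s)]$ --- all lying in $\mathbb L^2$ by the a~priori bounds of Theorems~\ref{thm31} and~\ref{thm32} --- one checks term by term that the FBSDE~(\ref{eq:SMP}) and the condition~(\ref{stationu2}) coincide with the mean-field FBSDE~(\ref{eq:SMP0}) and the condition~(\ref{cons}) of Theorem~\ref{preSMP}, after substituting $\alpha^\ast=\mathbb E[Y^\ast]$, $\beta^\ast=\mathbb E[Z^\ast]$, $\gamma^\ast=\mathbb E[u^\ast]$. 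This simultaneously (a)~exhibits an explicit solution of~(\ref{eq:operators}) at $\bm\eta^\ast$, closing the conditional gap left in~(i); (b)~identifies $(X^\ast,Y^\ast,Z^\ast,u^\ast)$ with the optimal tuple of Theorem~\ref{preSMP}, so that $u^\ast$ is optimal for Problem~(MF-BSLQ); and (c)~delivers uniqueness of $u^\ast$ directly from Theorem~\ref{preSMP}.

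The main obstacle is the solvability of the implicitly-defined operator systems: proving directly that~(\ref{eq:operators}) --- equivalently, the coupled Riccati/BSDE system behind the operators $\mathcal{P}_{ij},\mathcal{L}_{il},\mathcal{O}_{ij}$ constructed in Section~\ref{sec5} --- has a solution is precisely the point flagged as ``unclear'' in the text, so the argument must route around it: Lemma~\ref{lem:uu} needs only one solution, not uniqueness, and the detour through Theorem~\ref{preSMP} supplies that one solution together with the existence of $\bm\eta^\ast$. The accompanying bookkeeping obstacle is the apparent circularity of the construction: one must make sure the chain of substitutions ``replace $\alpha,\beta,\gamma$ by $\alpha^\ast,\beta^\ast,\gamma^\ast$'' is self-consistent, i.e.\ that the fixed-point relations $\mathbb E[Y^{u^\ast_{\bm\eta^\ast}}]=\alpha^\ast$, $\mathbb E[Z^{u^\ast_{\bm\eta^\ast}}]=\beta^\ast$, $\mathbb E[u^\ast_{\bm\eta^\ast}]=\gamma^\ast$ indeed hold (this part being built into Lemma~\ref{lem:uabg} once $\bm\eta^\ast$ is defined from the moments of $u^\ast$), and that the resulting $(X^\ast,Y^\ast,Z^\ast)$ is the \emph{unique} solution of the coupled system, which is where Lemma~\ref{lem:XYZ1} is invoked.
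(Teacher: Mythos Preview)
Your proposal is correct and follows essentially the same route as the paper: the nested-infimum identity, the decomposition into Problems~(a) and~(b), and the chain of Lemmas~\ref{lem:uabg}, \ref{lem:p11}, \ref{lem:XYZ1}, \ref{lem:uu}, \ref{lem:conditions}. The paper's own proof is a two-line sketch invoking exactly these lemmas; your step~(iii), which manufactures an explicit $\bm\lambda$ from the Theorem~\ref{preSMP} data to bypass the ``unclear'' solvability of~(\ref{eq:operators}), is a genuine improvement in rigor over what the paper writes down, but it does not change the architecture of the argument.
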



\section{ The proof to Theorem \ref{preSMP}}\label{sec4}
In this section,  we give a lemma to state the existence and uniqueness of the optimal control for the Problem (MF-BSDE).  Finally,  we prove Theorem \ref{preSMP} at the end of this section.

Before presenting the proof of Theorem \ref{preSMP},  we now give the following lemma to ensure the  uniqueness of the optimal control 
for the Problem (MF-BSLQ). The existence  is ensured by Mazur's theorem, see Theorem 5.2 of \cite{yong2012stochastic},  we only prove the uniqueness.
\begin{lemma}\label{lem:strictc}
	Let (H1) and (H2) hold,  the cost functional $ J\big( u(\cdot)\big) $ is strictly convex. 
\end{lemma}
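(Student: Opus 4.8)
The plan is to show that $J$ is strictly convex by establishing that $u \mapsto Y^u(\cdot), Z^u(\cdot)$ is affine, so that $J$ decomposes into a quadratic form plus lower-order terms, and the quadratic part is strictly positive thanks to the coercivity of $N$ and $R$ in (H2). First I would fix $u_1(\cdot), u_2(\cdot) \in \mathcal{U}[0,T]$ with $u_1 \neq u_2$, and $\theta \in (0,1)$, and set $u_\theta = \theta u_1 + (1-\theta) u_2$. By linearity of the state equation (\ref{state1}) and uniqueness from Theorem \ref{thm31}, the corresponding solutions satisfy $(Y^{u_\theta}, Z^{u_\theta}) = \theta(Y^{u_1}, Z^{u_1}) + (1-\theta)(Y^{u_2}, Z^{u_2})$. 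Writing $\delta u = u_1 - u_2$ and $(\delta Y, \delta Z)$ for the solution of the homogeneous equation (i.e. (\ref{state1}) with $\xi = 0$ and control $\delta u$), one also has $(Y^{u_1} - Y^{u_2}, Z^{u_1} - Z^{u_2}) = (\delta Y, \delta Z)$.

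The second step is the direct computation of the convexity gap. Since each term inside $J$ is a quadratic form in $(Y^u(0), Y^u, Z^u, \mathbb{E}Y^u, \mathbb{E}Z^u, u, \mathbb{E}u)$ and these all depend affinely on $u$, the standard identity for quadratic functionals gives
\begin{equation*}
\theta J(u_1) + (1-\theta) J(u_2) - J(u_\theta) = \theta(1-\theta)\, Q_0(\delta u),
\end{equation*}
where $Q_0(\delta u)$ is $J$ evaluated along the homogeneous system, namely
\begin{equation*}
Q_0(\delta u) = \mathbb{E}\Big\{\langle G\,\delta Y(0), \delta Y(0)\rangle + \int_0^T \big[\langle Q \delta Y, \delta Y\rangle + \langle \bar Q \mathbb{E}\delta Y, \mathbb{E}\delta Y\rangle + \langle R \delta Z, \delta Z\rangle + \langle \bar R \mathbb{E}\delta Z, \mathbb{E}\delta Z\rangle + \langle N \delta u, \delta u\rangle + \langle \bar N \mathbb{E}\delta u, \mathbb{E}\delta u\rangle\big]\,ds\Big\}.
\end{equation*}
Under (H2), $G \geq 0$, $Q, \bar Q, R, \bar R, N, \bar N \geq 0$, and moreover $N(s) \geq \delta I_m$, so every term is nonnegative and $Q_0(\delta u) \geq \delta\, \mathbb{E}\int_0^T |\delta u(s)|^2\,ds$. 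Since $u_1 \neq u_2$ in $\mathcal{U}[0,T]$ means $\mathbb{E}\int_0^T |\delta u(s)|^2 ds > 0$, we get $Q_0(\delta u) > 0$, hence $\theta J(u_1) + (1-\theta)J(u_2) - J(u_\theta) = \theta(1-\theta) Q_0(\delta u) > 0$, which is precisely strict convexity.

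I do not expect a serious obstacle here; the only point requiring a little care is justifying the quadratic-functional identity rigorously — i.e. that the cross terms cancel correctly. This is handled by writing $J(u_\theta) = J\big(u_2 + \theta\,\delta u\big)$, expanding each bilinear form, and noting that the coefficient of $\theta$ is linear and the coefficient of $\theta^2$ is exactly $Q_0(\delta u)$; combining with the analogous expansion of $\theta J(u_1) + (1-\theta)J(u_2)$ gives the stated identity. All integrals are finite by Theorem \ref{thm31} and the boundedness of the coefficients in (H1)–(H2), so the manipulations are legitimate. (Note that the coercivity of $R$ is not even needed for strict convexity; the coercivity of $N$ alone suffices, since strictness is measured in the control variable.)
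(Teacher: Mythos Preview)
Your argument is correct. The affine dependence of $(Y^u,Z^u,\mathbb{E}Y^u,\mathbb{E}Z^u,\mathbb{E}u,Y^u(0))$ on $u$ (via linearity of (\ref{state1}) and Theorem~\ref{thm31}) combined with the standard parallelogram-type identity for symmetric bilinear forms gives exactly the convexity gap $\theta(1-\theta)Q_0(\delta u)$ you write down, and (H2) makes this strictly positive through the term $\langle N\delta u,\delta u\rangle\ge\delta|\delta u|^2$.

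The paper takes a somewhat different route: it introduces bounded linear operators $\cH_1,\cH_2,\cH_3,\cH_4$ and $\cN_1,\cN_2,\cN_3$ so that $(Y,Z)^\top=\cH_1u+\cH_3\xi$, $Y(0)=\cH_2u+\cH_4\xi$, $\mathbb{E}(Y,Z)^\top=\cN_1u+\cN_2\xi$, $\mathbb{E}u=\cN_3u$, and then rewrites $J(u)$ explicitly as $\langle\mathcal{A}u,u\rangle_{\LL^2_\FF}+2\langle\mathcal{B}u,\xi\rangle_{\LL^2_\FF}+\langle\mathcal{C}\xi,\xi\rangle_{\LL^2_\FF}$ with $\mathcal{A}=\cH_2^*G\cH_2+\cH_1^*\cQ_1\cH_1+\cN_1^*\cQ_2\cN_1+N+\cN_3^*\bar N\cN_3\ge\delta I$, from which strict convexity is read off. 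Your approach is more elementary and self-contained: you bypass the operator formalism entirely and go straight to the convexity inequality, which is all that is needed for this lemma. The paper's decomposition, on the other hand, makes the quadratic structure of $J$ in $u$ globally explicit; that representation is in the same spirit as the operator expansions used later in Sections~\ref{sec5}--\ref{sec6}, so it fits the overall style of the article even if it is heavier than necessary for this particular lemma. Your remark that the coercivity of $R$ is not needed for strict convexity is also correct.
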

\begin{proof}
	For any \(u\in\cU[0,T]\), consider the following BSDE\(:\) \ for \(s\in[0,T]\),
	\begin{equation}\label{stateY0u}
		\left\{\begin{aligned}
			d Y^{0,u}(s)= & \{A(s) Y^{0,u}(s)+\bar{A}(s) \mathbb{E}[Y^{0,u}(s)]+B(s) u(s)+\bar{B}(s) \mathbb{E}[u(s)] \\
			& +C(s) Z^{0,u}(s)+\bar{C}(s) \mathbb{E}[Z^{0,u}(s)]\} d s+Z^{0,u}(s) d W(s), \\
			Y^{0,u}(T)= & 0.
		\end{aligned}\right.
	\end{equation}
	By Theorem \ref{preSMP}, the above BSDE admits a unique solution 
	$$(Y^{0,u},Z^{0,u})\in L^2_\FF \big(\Omega;C(0,T;\RR^n)\big)\times L^2_\FF({\RR}^n).$$ 
	By the linearity of BSDE (\ref{stateY0u}) and Theorem \ref{thm31},  we can define two bounded linear operators $$\cH_1:\cU[0,T]\rightarrow L^2_\FF \big(\Omega;C(0,T;\RR ^n)\big)\times  L^2_\FF({\RR}^n),$$ and \(\cH_2:\cU[0,T]\rightarrow L^2_{\cF_0}(\RR ^n)\) as follows:
	\begin{align*}
		\cH_1 u :=(Y^{0,u},Z^{0,u})^\top,\quad\quad \cH_2 u : =Y^{0,u}(0).
	\end{align*}
	Also we can define the bounded linear operators $$\cH_3:L^2_{\cF_T}(\RR ^n)\rightarrow L^2_\FF \big(\Omega;C(t,T;\RR^n)\big)\times L^2_\FF({\RR}^n),$$ and  \(\cH_4:L^2_{\cF_T}(\RR ^n)\rightarrow L^2_{\cF_0}\big(\RR ^n\big)\) as follows:
	\begin{align*}
		\cH_3 \xi :=(Y^{\xi,0},Z^{\xi,0})^\top,\quad\quad \cH_4\xi :=Y^{\xi,0}(0),  
	\end{align*}
	where \((Y^{\xi,0},Z^{\xi,0})^\top\) is the   solution of the following BSDE:\ for \(s\in[0,T]\),
	\begin{equation}\label{stateYxi0}
		\left\{\begin{aligned}
			d Y^{\xi,0}(s)= & \{A(s) Y^{\xi,0}(s)+\bar{A}(s) \mathbb{E}[Y^{\xi,0}(s)] +C(s) Z^{\xi,0}(s)+\bar{C}(s) \mathbb{E}[Z^{\xi,0}(s)]\} d s\\
			&+Z^{\xi,0}(s) d W(s),  \\
			Y^{\xi,0}(T)= & \xi.
		\end{aligned}\right.
	\end{equation}
	It's obvious that for any \((\xi,u)\in L^2_{\cF_T}(\RR ^n)\times \cU[0,T]\), the sum \((Y^{0,u}+Y^{\xi,0},Z^{0,u}+Z^{\xi,0})^\top\) is the solution of (\ref{state1}). And by the uniqueness of the   solution of (\ref{state1}), we obtain
	\begin{align}
		(Y,Z)^\top=(Y^{0,u}+Y^{\xi,0},Z^{0,u}+Z^{\xi,0})^\top=\cH_1 u+\cH_3 \xi. 
	\end{align}
	Moreover, \(Y(0)=\cH_2 u+\cH_4 \xi\).
	
	Since that \(\mathbb{E}[(Y,Z)^\top]=\mathbb{E}[\cH_1 u+\cH_3\xi]\), 
	we can also define two another bounded linear operators
	$$\cN_1:\cU[0,T]\rightarrow \LL^2 \times \LL^2$$  and 
	$$\cN_2:L^2_{\cF_T}(R ^n)\rightarrow \LL^2 \times \LL^2 $$ such that
	\begin{align*}
		\cN_1 u :=\mathbb{E}[\cH_1 u],\quad\quad \cN_2 \xi :=\mathbb{E}[\cH_3 \xi].
	\end{align*}
	Therefore,\  \(\mathbb{E}[(Y,Z)^\top ]=\cN_1u+\cN_2 \xi\).
	
	Additional, define linear  bounded operator \(\cN_3 :\cU [0,T]\rightarrow \LL^2\) 
	\begin{align*}
		\cN_3 u :=\mathbb{E}[u],
	\end{align*}
	then define
	\begin{align*}
		\mathcal{Q}_1 :=
		\begin{pmatrix}
			Q&0\\
			0& R
		\end{pmatrix},\quad\quad
		\mathcal{Q}_2 :=
		\begin{pmatrix}
			\bar Q&0\\
			0&\bar R
		\end{pmatrix}.
	\end{align*}
	
	Therefore, the cost functional (\ref{cost}) can be rewritten as follows:
	\begin{align*}
		J( u(\cdot)) = \mathbb{E}\bigg\{ & \langle G Y(0), Y(0)\rangle 
		+\int_0^T\Big[\Big\langle 
		\begin{pmatrix}
			Q&0\\
			0& R
		\end{pmatrix}\begin{pmatrix}
			Y\\
			Z
		\end{pmatrix} 	,
		\begin{pmatrix}
			Y\\
			Z
		\end{pmatrix} \Big\rangle \\
		&+\Big\langle 
		\begin{pmatrix}
			\bar Q&0\\
			0&\bar R
		\end{pmatrix}	\begin{pmatrix}
			\mathbb{E}[Y]\\
			\mathbb{E}[Z]
		\end{pmatrix},
		\begin{pmatrix}
			\mathbb{E}[Y]\\
			\mathbb{E}[Z]
		\end{pmatrix}\Big\rangle\\
		&+\langle N u, u \rangle   + \langle \bar N  \mathbb{E} [u], \mathbb{E} [u]\rangle  \Big] ds \bigg\}\\
		=&\langle G (\cH_2 u+\cH_4 \xi),\cH_2 u+\cH_4 \xi\rangle_{\LL^2_{\FF}}+\langle  \mathcal{Q}_1(\cH_1 u+\cH_3\xi),(\cH_1 u+\cH_3\xi)\rangle_{\LL^2_{\FF}} \\
		&+\langle\mathcal{Q}_2 (\cN_1 u+\cN_2\xi),(\cN_1 u+\cN_2\xi)\rangle _{\LL^2_{\FF}}+\langle Nu,u\rangle _{\LL^2_{\FF}}+\langle \bar N \cN_3 u,\cN_3 u\rangle_{\LL^2_{\FF}}\\
		=&\langle (\cH_2^* G\cH_2 +\cH_1^* \mathcal{Q}_1 \cH_1 +\cN_1^*\mathcal{Q}_2 \cN_1+N+\cN_3^*\bar N\cN_3)u,u\rangle_{\LL^2_{\FF}}\\
		&+\langle (\cH_4^*G\cH_4+\cH_3^*\mathcal{Q}_1\cH_3 +\cN_2^*\mathcal{Q}_2\cN_2 )\xi,\xi\rangle _{\LL^2_{\FF}} \\
		&+2\langle (\cH_4^*G\cH_2+\cH_3^*\mathcal{Q}_1\cH_1+\cN_2^*\mathcal{Q}_2\cN_1)u,\xi\rangle_{\LL^2_{\FF}},
	\end{align*}
	where $\langle \cdot, \cdot \rangle _{\LL^2_{\FF}}$ denotes the inner  product in the space $\LL^2_{\FF}(\RR ^n)$.
	Based on the assumption (H2), we have
	\begin{align*}
		\langle (\cH_2^* G\cH_2 +\cH_1^* \mathcal{Q}_1 \cH_1 +\cN_1^*\mathcal{Q}_2 \cN_1+N+\cN_3^*\bar N\cN_3)u,u\rangle_{\LL^2_{\FF}}  \geq \delta \mathbb{E}\int_0^T |u(s)|^2ds.
	\end{align*}
	Therefore, the mapping \(u\rightarrow J(u)\) is strictly convex.
\end{proof}

Let $\big(u^\ast(\cdot), Y^\ast(\cdot), Z^\ast(\cdot)\big)$ be the optimal state control pair which  satisfies the state equation  (\ref{state1}), and $\big(Y^\epsilon(\cdot), Z^\epsilon(\cdot)\big)$ be the  solution corresponding to the control $u^{\epsilon}(\cdot)=u^\ast(\cdot)+\epsilon v (\cdot)$, where  $\epsilon \in \RR$,  $v (\cdot) \in \mathcal{U}[0, T] $.  Then, we introduce the variation equation:\ for $s \in[0, T]$,
\begin{equation}\label{eqY1}
	\left\{\begin{aligned}
		d Y_1(s)= &\{A(s) Y_1(s)+\bar{A}(s) \mathbb{E}[Y_1(s)]+B(s) v(s)+\bar{B}(s) \mathbb{E}[v(s)] \\
		& +C(s) Z_1(s)+\bar{C}(s) \mathbb{E}[Z_1(s)]\} d s+Z_1(s) d W(s),  \\
		Y_1(T)= & 0.
	\end{aligned}\right.
\end{equation}

It's easy to verify that 
\begin{align*}
	\big(Y^\epsilon(s)-Y^*(s)\big)= 	\epsilon Y_1(s),\text{ and } \big(Z^\epsilon(s)-Z^*(s)\big)=\epsilon  Z_1(s),\text{a.s.},s\in[0,T],\text{a.e.}
\end{align*}
Then, by Theorem \ref{thm31}, we have 
\begin{align*}
	&\mathbb{E}\Big[\sup_{0\leq s\leq T}\{|Y^\epsilon(s)-Y^*(s)|^2\}+\int_0^T|Z^\epsilon(s)-Z^*(s)|^2ds\Big]\\
	&=\epsilon^2 \mathbb{E}[\sup_{0\leq s\leq T}|Y_1(s)|^2+\int_0^T |Z_1(s)|^2ds]\\
	&\leq K\epsilon^2  \mathbb{E}[\int_0^T |v(s)|^2ds]\\
	&\leq K\epsilon^2, 
\end{align*}
where \(K > 0\) is a constant which can be different from line to line.

Now, we are in the position to present the proof of Theorem \ref{preSMP}.

{\em	Proof of Theorem \ref{preSMP}}: 
As we mentioned earlier, the existence of an optimal control
follows from the same argument as in the proof of Theorem 5.2 of \cite{yong2012stochastic} using Mazur's  theorem. To prove the uniqueness, suppose that $u_1(\cdot)$ and $u_2(\cdot)$ are two different optimal controls. Let $u_3(\cdot)=\frac{1}{2}(u_1(\cdot)+u_2(\cdot))$. By strict convexity of the cost functional proved in the last lemma, we have
\[J(u_1(\cdot))=J(u_2(\cdot))\le J(u_3(\cdot))<\frac12(J(u_1(\cdot))+J(u_2(\cdot)))=J(u_1(\cdot)).\]
This contradiction implies the uniqueness of the optimal control. 

Now we prove the necessity. 
As $u^*(\cdot)$ is an optimal control for Problem (MF-BSLQ), $\big(Y^\ast(\cdot), Z^\ast(\cdot)\big)$ be the corresponding optimal state, we have
$$
\begin{aligned}
	0  \leq &  \lim _{\epsilon \rightarrow 0} \frac{J\left( u^\epsilon(\cdot)\right)-J\left(u^*(\cdot)\right)}{\epsilon} \\
	=& 2 \mathbb{E}\Big\{\int_0^T(\langle Q Y^*, Y_1\rangle+\langle \bar Q \mathbb{E} Y^*, \mathbb{E} Y_1\rangle+\langle N u^*, v\rangle+ \langle \bar N  \mathbb{E} u^*,  \mathbb{E} v\rangle \\
	&+ \langle R Z^\ast, Z_1\rangle+\langle\bar{R} \mathbb{E}Z^\ast, \mathbb{E}Z_1\rangle 
	) d s+\langle G Y^\ast(0), Y_1(0)\rangle\Big\} \\
	=& 2 \mathbb{E}\Big\{\int_0^T\Big(\langle Q Y^* + \mathbb{E}  \bar Q \mathbb{E} Y^*, Y_1\rangle+ \langle R Z^\ast+ \mathbb{E}\bar{R} \mathbb{E}Z^\ast, Z_1\rangle\\
	&+  \langle  N  u^* + \mathbb{E} \bar N  \mathbb{E} u^*  , v\rangle 
	\Big) d s+\langle G Y^\ast(0), Y_1(0)\rangle\Big\}.
\end{aligned}
$$
Applying It\^{o}'s formula to $\langle X^\ast, Y_1\rangle$, we have
$$
\begin{aligned}
	d \langle X^\ast, Y_1\rangle = & -\langle  A ^\top X^\ast+ \mathbb{E}[\bar{A}^\top X^\ast]- QY^\ast- \mathbb{E}\bar Q \mathbb{E} Y^\ast, Y_1 \rangle   d s \\
	& - \langle C^\top X^\ast+ \mathbb{E}[\bar{C} ^\top X^\ast]-R Z^\ast-\mathbb{E}\bar{R} \mathbb{E}Z^\ast , Y_1 \rangle d W(s)\\
	& +\langle X^\ast, AY_1+\bar{A} \mathbb{E}Y_1+B v+\bar{B} \mathbb{E}v+CZ_1+\bar{C} \mathbb{E}[Z_1] \rangle ds \\
	&+ \langle X^\ast, Z_1 \rangle dW(s) - \langle C^\top X^\ast+ \mathbb{E}[\bar{C} ^\top X^\ast]-R Z^\ast- \mathbb{E}\bar{R} \mathbb{E}Z^\ast, Z_1 \rangle  ds  \\
	= & \langle -\mathbb{E}[\bar{A}^\top X^\ast]+ QY^\ast+ \mathbb{E}\bar Q \mathbb{E} Y^\ast, Y_1\rangle   ds +\langle X^\ast, \bar{A} \mathbb{E}Y_1+B v+\bar{B} \mathbb{E}v+\bar{C} \mathbb{E}Z_1 \rangle ds \\
	&+  \langle - \mathbb{E}[\bar{C} ^\top X^\ast]+R Z^\ast+ \mathbb{E}\bar{R} \mathbb{E}Z^\ast, Z_1 \rangle  ds  + (...)dW(s).
\end{aligned}
$$
Therefore, 
$$
\begin{aligned}
	\mathbb{E}\langle G Y^\ast(0), Y_1(0)\rangle =&  - \mathbb{E} \int ^T_0 \Big(\langle -\mathbb{E}[\bar{A}^\top X^\ast]+ QY^\ast+ \mathbb{E}\bar Q \mathbb{E} Y^\ast, Y_1\rangle  \\
	& +\langle X^\ast, \bar{A} \mathbb{E}Y_1+B v+\bar{B} \mathbb{E}v+\bar{C} \mathbb{E}Z_1 \rangle \\
	&+ \langle -\mathbb{E}[\bar{C} ^\top X^\ast]+R Z^\ast+ \mathbb{E}\bar{R} \mathbb{E}Z^\ast, Z_1 \rangle\Big) ds,
\end{aligned}
$$
namely, 
$$
\begin{aligned}
	0 & \leq \lim _{\epsilon \rightarrow 0} \frac{J\left( u^\epsilon(\cdot)\right)-J\left( u^*(\cdot)\right)}{\epsilon} \\
	& \leq 2 \mathbb{E}\Big\{\int_0^T  \langle  N  u^* + \mathbb{E} \bar N  \mathbb{E} u^* - B^\top X^\ast- \mathbb{E}[\bar B ^\top X^\ast]  , v\rangle  ds. 
\end{aligned}
$$
Since $v$ is arbitrary,\   it follows that
$$
\begin{aligned}
	N (s) u^* (s)+ \mathbb{E} \bar N (s) \mathbb{E} u^*(s) - B (s)^\top X^\ast(s)- \mathbb{E}[ \bar B (s)^\top  X^\ast(s)]  = 0,\;  a.e. \; s \in [0, T], \; a.s.
\end{aligned}
$$

For the sufficiency, we will now verify that if \(u^*(\cdot)\) satisfies the above equation, then \(u^*(\cdot)\) is an optimal control, i.e. 
\begin{align*}
	J(u^*(\cdot))\leq J(u^*(\cdot)+\epsilon v(\cdot)), \quad \forall \epsilon\in \mathbb{R},\quad v\in \mathcal{U}[0,T].
\end{align*}

Let \( (Y_1(\cdot), Z_1(\cdot)) \) be the solution of (\ref{eqY1}) corresponding to the control \( v(\cdot) \). We obtain 
\begin{align*}
	&J(u^*(\cdot)+\epsilon v(\cdot))-J(u^*(\cdot))\\
	=&\epsilon^2 \mathbb{E}\Big\{\langle GY_1(0),Y_1(0)\rangle +\int_0^T \langle QY_1,Y_1\rangle + \langle \bar{Q}\mathbb{E} Y_1,\mathbb{E}Y_1\rangle\\
	& +\langle R Z_1,Z_1\rangle +\langle \bar{R}\mathbb{E}Z_1,\mathbb{E}Z_1\rangle +\langle N v,v \rangle +\langle \bar{N}\mathbb{E}[v],\mathbb{E}[v]\rangle\Big\}\\
	&+2 \epsilon \mathbb{E}\Big\{\int_0^T\Big(\langle Q Y^* + \mathbb{E}  \bar Q \mathbb{E} Y^*, Y_1\rangle+ \langle R Z^\ast+ \mathbb{E}\bar{R} \mathbb{E}Z^\ast, Z_1\rangle\\
	&+  \langle  N  u^* + \mathbb{E} \bar N  \mathbb{E} u^*  , v\rangle 
	\Big) d s+\langle G Y^\ast(0), Y_1(0)\rangle\Big\}\\
	&=\epsilon^2  \mathbb{E}\Big\{\langle GY_1(0),Y_1(0)\rangle +\int_0^T \langle QY_1,Y_1\rangle + \langle \bar{Q}\mathbb{E} Y_1,\mathbb{E}Y_1\rangle\\
	& +\langle R Z_1,Z_1\rangle +\langle \bar{R}\mathbb{E}Z_1,\mathbb{E}Z_1\rangle +\langle N v,v \rangle +\langle \bar{N}\mathbb{E}[v],\mathbb{E}[v]\rangle\Big\}.
\end{align*}

Based on the assumption \((H2)\), we obtain that 
\begin{align*}
	&\mathbb{E}\Big\{\langle GY_1(0),Y_1(0)\rangle +\int_0^T \langle QY_1,Y_1\rangle + \langle \bar{Q}\mathbb{E} Y_1,\mathbb{E}Y_1\rangle\\
	& +\langle R Z_1,Z_1\rangle +\langle \bar{R}\mathbb{E}Z_1,\mathbb{E}Z_1\rangle +\langle N v,v \rangle +\langle \bar{N}\mathbb{E}[v],\mathbb{E}[v]\rangle\Big\}\geq 0,
\end{align*}
which implies
\begin{align*}
	J(u^*(\cdot)+\epsilon v(\cdot))-J(u^*(\cdot))\geq 0,\quad\quad \forall \epsilon\in \mathbb{R},\quad v\in \mathcal{U}[0,T].
\end{align*}

In summary,  the proof is complete.
\qed
\begin{remark}
	Under  (H2),  we can  obtain a representation of the optimal control $u^\ast (\cdot) $ for the Problem (MF-BSDE).  More precisely,\  from the  stationary condition (\ref{cons}),  we have
	$$
	\begin{aligned}
		\mathbb{E}  	u^* (s)+	\mathbb{E}  N (s) ^{-1} \mathbb{E} \bar N (s) \mathbb{E} u^*(s) - \mathbb{E}  [N (s) ^{-1} B (s)^\top X^\ast(s)]-\mathbb{E}  N (s) ^{-1}  \mathbb{E}[ \bar B (s)^\top  X^\ast(s)]  = 0.
	\end{aligned}
	$$
	Since $I_n+ \mathbb{E}  N (s) ^{-1} \mathbb{E} \bar N (s)$ is an invertible matrix, then we have
	$$
	\begin{aligned}
		\mathbb{E}  	u^* (s) =  \big(I_n+ \mathbb{E}  N (s) ^{-1} \mathbb{E} \bar N (s) \big)^{-1}\Big\{\mathbb{E}  [N (s) ^{-1} B(s)^\top X^\ast(s)]+\mathbb{E}  N (s) ^{-1}  \mathbb{E}[ \bar B (s) ^\top X^\ast(s)] \Big\}.
	\end{aligned}
	$$
	Therefore, 
	\begin{equation}\label{optimalc}
		\begin{aligned}
			u^* (s) =& -N (s) ^{-1} 	\mathbb{E}\bar N (s)   \big(I_n+ \mathbb{E}  N (s) ^{-1} \mathbb{E} \bar N (s) \big)^{-1}\Big\{\mathbb{E}  [N (s) ^{-1} B (s)^\top X^\ast(s)]\\
			&+\mathbb{E}  N (s) ^{-1}  \mathbb{E}[ \bar B (s)^\top X^\ast(s)] \Big\} +N (s) ^{-1}  \Big\{B (s)^\top X^\ast(s)+ \mathbb{E}[ \bar B (s) ^\top X^\ast(s)] \Big\}.
		\end{aligned}
	\end{equation}
	From  above, it can be seen that if we substitute the optimal control (\ref{optimalc}) into the system (\ref{eq:SMP0}), we will obtain a highly complex optimality system, making it impossible to decouple and obtain an explicitly  representation of  optimal control.  However, Xiong and Xu \cite{xiong2024mean} gave us a procedure to find the optimal control, in which they provided the  extended LaGrange multiplier method to study the stochastic mean-field problem with random coefficients.  Inspired by \cite{xiong2024mean}, we will first  study the  constrained  BSLQ problem with random coefficients using the extended LaGrange multiplier  in the next section. 
\end{remark}

\section{Solving Problem (a)}\label{sec5}
In this section, we focus on solving Problem (a).  We will first establish the proofs of Lemmas \ref{lem:uabg} - \ref{lem:uu}. Then we will decouple the system (\ref{eq:SMP}) at the end of this section. 

The proof of Lemma \ref{lem:uabg} is
similar to the proof of Lemma 2.2 in \cite{xiong2024mean}, and  the proofs of Lemma  \ref{lem:XYZ1} and Lemma \ref{lem:uu} are analogous to those of Lemma 2.4 and Lemma 2.6 in \cite{xiong2024mean}, we omit the detailed derivations here for brevity.  

Now we  give the proof of Lemma \ref{lem:p11}.

\noindent\textit{Proof of Lemma \ref{lem:p11}:}
The existence and uniqueness of \(\tilde{u}_{\bm{\eta}}^{\bm{\lambda}}(\cdot) \) can be established by employing the approach outlined in Lemma \ref{lem:strictc}, which shows that \(J_{\bm{\eta}}(u(\cdot),\bm{\lambda})\) is strictly convex with respect to \(u(\cdot)\).  Now we prove the  remainder. 

Let \((\tilde{Y}(\cdot), \tilde{Z}(\cdot))\) denote the solution of (\ref{eq:BSDE1}) with respect to \(\tilde{u}_{\bm{\eta}}^{\bm{\lambda}}(\cdot) \), and let \((Y^\epsilon(\cdot), Z^\epsilon(\cdot))\) represent the state corresponding to the control \(u^\epsilon(\cdot) = \tilde{u}_{\bm{\eta}}^{\bm{\lambda}}(\cdot) + \epsilon v(\cdot)\), where \(\epsilon >0\) and \(v(\cdot) \in \mathcal{U}[0,T]\). 

The associated  variation equation  is given as follows for \(s \in [0, T]\):  
\begin{equation}\label{eq:Y1}
	\left\{\begin{aligned}
		d Y_1(s)= & \{A(s) Y_1(s)+B(s) v(s) 
		+C(s) Z_1(s)\} d s+Z_1(s) d W(s), \\
		Y_1(T)= & 0.
	\end{aligned}\right.
\end{equation}

So we have 
\begin{equation}
	\begin{aligned}
		0 \leq & \lim_{\epsilon \to 0} \frac{J_{\bm{\eta}}( u^\epsilon(\cdot),\bm{\lambda}) -J_{\bm{\eta}}( \tilde{u}_{\bm{\eta}}^{\bm{\lambda}}(\cdot),\bm{\lambda})  }{\epsilon}\\
		=&  2  \mathbb{E}\Big\{  \langle G \tilde Y(0), Y_1(0)\rangle + \int_0^T[\langle Q(s) \tilde Y(s), Y_1(s)\rangle   +\langle R(s) \tilde Z(s), Z_1(s)\rangle\\
		&+ \langle N(s)\tilde{u}_{\bm{\eta}}^{\bm{\lambda}}(s), v(s) \rangle   ] ds + 	\int_0^T \langle   \lambda_1 (s), Y_1(s) \rangle ds 		+\int_0^T \langle  \lambda_2 (s), Z_1(s) \rangle ds\\
		&+\int_0^T \langle  \lambda_3(s), v(s) \rangle ds  \Big\},\\
		=&  2  \mathbb{E}\Big\{  \langle G \tilde Y(0), Y_1(0)\rangle +  \int_0^T[\langle Q(s) \tilde Y(s)+  \lambda_1 (s), Y_1(s)\rangle   +\langle R(s) \tilde Z(s)+ \lambda_2 (s), Z_1(s)\rangle \\
		&+  \langle N(s)\tilde{u}_{\bm{\eta}}^{\bm{\lambda}}(s)+  \lambda_3(s), v(s) \rangle   ] ds. 
	\end{aligned}
\end{equation}

Now we apply It\^{o}'s formula to $ \langle	\tilde X(\cdot), Y_1(\cdot) \rangle$,  we obtain 
$$
\begin{aligned}
	d \langle \tilde X(s), Y_1(s) \rangle = &- \langle A(s) ^\top \tilde X(s)-Q(s)\tilde Y(s) -  \lambda_1 (s), Y_1(s) \rangle d s \\
	&-\langle  C(s)^\top \tilde X(s)- R(s) \tilde Z(s)- \lambda_2 (s ) , Z_1(s)\rangle d s\\
	&+ \langle \tilde X(s), A(s) Y_1(s)+B(s) v(s) 
	+C(s) Z_1(s) \rangle ds +(...)dW(s).
\end{aligned}
$$

Therefore, 
$$
\begin{aligned}
	\mathbb{E} \langle \tilde X(0), Y_1(0) \rangle = &\int^T_0  \Big\{ -\langle Q(s)\tilde Y(s) + \lambda_1 (s), Y_1(s) \rangle  
	-\langle  R(s) \tilde Z(s)+ \lambda_2 (s ) , Z_1(s)\rangle \\
	&-\langle \tilde X(s), B(s) v(s) 
	\rangle \Big\} ds.
\end{aligned}
$$
Namely,  we have
\begin{equation}
	\begin{aligned}
		0 \leq & \lim_{\epsilon \to 0} \frac{J_{\bm{\eta}}( u^\epsilon(\cdot),\bm{\lambda}) -J_{\bm{\eta}}( \tilde{u}_{\bm{\eta}}^{\bm{\lambda}}(\cdot),\bm{\lambda})  }{\epsilon}\\
		\leq & 2  \mathbb{E}  \int_0^T    \langle N(s) \tilde{u}_{\bm{\eta}}^{\bm{\lambda}}(s)+  \lambda_3(s)-B(s)^\top \tilde  X(s), v(s) \rangle   ds. 
	\end{aligned}
\end{equation}
Therefore, by the arbitrariness of \(v(\cdot)\), we have 
$$
N(s) \tilde{u}_{\bm{\eta}}^{\bm{\lambda}}(s)-B(s)^\top \tilde X (s)+ \lambda_3(s)=0,\text{ a.e. }s\in[0,T],\text{ a.s.}
$$

We have completed the proof of necessity. We will now proceed to prove sufficiency, which requires verifying that if \(\tilde{u}_{\bm{\eta}}^{\bm{\lambda}}(\cdot)\) satisfies the above equation, it is indeed an optimal control. 

For any \(\epsilon > 0\) and \(v(\cdot) \in \mathcal{U}[0,T]\), let \((Y_1(\cdot), Z_1(\cdot))\) denote the unique solution to (\ref{eq:Y1}) corresponding to the control \(v(\cdot)\). Furthermore, let the state associated with the control \(u^\epsilon(\cdot)\) be given by \((Y^\epsilon(\cdot), Z^\epsilon(\cdot)) = (\tilde{Y}(\cdot) + \epsilon Y_1(\cdot), \tilde{Z}(\cdot) + \epsilon Z_1(\cdot))\).Then, it follows that
\begin{equation*}
	\begin{aligned}
		J_{\bm{\eta}}&(u^\epsilon(\cdot),\bm{\lambda})-J_{\bm{\eta}}(\tilde{u}_{\bm{\eta}}^{\bm{\lambda}}(\cdot),\bm{\lambda})\\
		=&\epsilon^2 \mathbb{E}\Big\{\langle GY_1(0),Y_1(0)\rangle+\int_0^T\langle Q(s)Y_1(s),Y_1(s)\rangle+\langle R(s)Z_1(s).Z_1(s)\rangle+\langle N(s)v(s),v(s)\rangle ds\Big\}\\
		&+\epsilon\mathbb{E}\Big\{ \langle G \tilde Y(0), Y_1(0)\rangle + \int_0^T[\langle Q(s) \tilde Y(s), Y_1(s)\rangle   +\langle R(s) \tilde Z(s), Z_1(s)\rangle+ \langle N(s)\tilde{u}_{\bm{\eta}}^{\bm{\lambda}}(s), v(s) \rangle   ] ds\\
		& + 	\int_0^T \langle   \lambda_1 (s), Y_1(s) \rangle ds 		+\int_0^T \langle  \lambda_2 (s), Z_1(s) \rangle ds+\int_0^T \langle \lambda_3(s),v(s)\rangle ds\Big\}.
	\end{aligned}
\end{equation*}

Based on the assumption (H2), we can conclude that
\begin{equation*}
	\begin{aligned}
		\mathbb{E}\Big\{\langle GY_1(0),Y_1(0)\rangle+\int_0^T\langle Q(s)Y_1(s),Y_1(s)\rangle+\langle R(s)Z_1(s).Z_1(s)\rangle+\langle N(s)v(s),v(s)\rangle ds\Big\}\geq 0,
	\end{aligned}
\end{equation*}
which indicates that
\begin{equation*}
	\begin{aligned}
		J_{\bm{\eta}}&(u^\epsilon(\cdot),\bm{\lambda})-J_{\bm{\eta}}(\tilde{u}_{\bm{\eta}}^{\bm{\lambda}}(\cdot),\bm{\lambda})\geq 0,\quad\forall \epsilon>0,\quad v(\cdot)\in\mathcal{U}[0,T].
	\end{aligned}	
\end{equation*}

Thus, the proof is complete.
\qed

At the end of this section, we focus on decoupling the fully coupled FBSDE (\ref{eq1}) using the invariant embedding technique. For \(s \in [0, T]\), we proceed as follows:
\begin{align}\label{yy}
	\tilde Y(s)=-\Sigma(s) \tilde X(s)-\phi(s),
\end{align}
where 
\begin{equation*}
	\left\{
	\begin{aligned}
		&	d \Sigma(s)=-\Sigma_1(s)ds-\Phi(s)dW(s),\\
		&	\Sigma(T)=0,
	\end{aligned}
	\right. 
\end{equation*}
and 
\begin{equation*}
	\left\{
	\begin{aligned}
		&d\phi(s)=-\phi_1(s)ds-\varphi(s)dW(s),\\
		&\phi(T)=-\xi,
	\end{aligned}
	\right.
\end{equation*}
with \(\Sigma_1(\cdot)\) and \(\phi_1(\cdot)\) being determined later. For simplicity,  we will dropped the dependence of the processes on \( s \) in the following text.

Applying It\^{o}'s formula,  we obtain 
\begin{align*}
	d\tilde Y=&\Sigma_1\tilde Xds+\Phi\tilde X dW(s)+\Sigma \big\{
	(A^\top \tilde X- Q\tilde Y
	- \lambda_1 ) d s +(C^\top \tilde X-R \tilde Z-\lambda_2  ) d W(s)
	\big\}\\
	&-\Phi(C^\top \tilde X- R \tilde Z-\lambda_2  ) ds+\phi_1ds+\varphi dW(s).
\end{align*}

Comparing above equation with the first equation of (\ref{eq1}), we get
\begin{equation}\label{YZ}
	\left\{
	\begin{aligned}
		0=&\Sigma_1\tilde X+\Sigma(A ^\top \tilde X- Q\tilde Y - \lambda_1 ) -\Phi(C^\top \tilde X
		- R \tilde Z-\lambda_2  )+\phi_1\\
		&-\{A \tilde Y+BN ^ {-1}B^\top \tilde X 
		-BN^ {-1}\lambda_3   +C \tilde  Z+\bar{A}\alpha+\bar{B} \gamma +\bar{C} \beta\},\\
		\tilde Z=&\Phi \tilde X  +\Sigma (C ^\top \tilde X- R \tilde Z-\lambda_2  )+\varphi.
	\end{aligned}
	\right.
\end{equation}

If  \(I_n +\Sigma R \) is invertible,\  we  have 
\begin{align}\label{Z}
	\tilde Z=\big(I_n+\Sigma R\big)^{-1}\Big\{\big(\Phi+\Sigma C^\top\big)\tilde X-\Sigma \lambda_2+\varphi \Big\}.
\end{align}

Now inserting (\ref{Z}) and \(\tilde Y=-\Sigma \tilde X-\phi \) into the first equation of (\ref{YZ}),  we have
\begin{align*}
	0=&\Sigma_1\tilde X +\Sigma[A ^\top \tilde X+Q(\Sigma \tilde X+\phi) - \lambda_1 ] -\Phi\{C^\top \tilde X-R(I_n+\Sigma R)^{-1}\\
	&[(\Phi+\Sigma C^\top)\tilde X-\Sigma\lambda_2+\varphi]-\lambda_2  \}+\phi_1+\{A (\Sigma \tilde X+\phi)-B N^ {-1}B^\top \tilde X \\
	& +B N^ {-1}\lambda_3  -C (I_n+\Sigma R)^{-1}[(\Phi+\Sigma C^\top)\tilde X-\Sigma\lambda_2+\varphi]-\bar{A}\alpha-\bar{B} \gamma -\bar{C} \beta\}.
\end{align*}
Namely 
\begin{align*}
	0=&\Big\{\Sigma_1+\Sigma A^\top+\Sigma  Q \Sigma -\Phi C ^\top+\Phi R (I_n+\Sigma R )^{-1}(\Phi+\Sigma C^\top)+A\Sigma-BN^{-1}B^\top\\
	&-C(I_n+\Sigma R)^{-1}(\Phi+\Sigma C^\top)\Big\}\tilde X\\
	&+\Sigma Q \phi -\Sigma \lambda_1 +\Phi [R (I_n +\Sigma R)^{-1}(-\Sigma \lambda_2 +\varphi )-\lambda_2 ]+\phi_1 +A \phi+ B N ^{-1}\lambda_3 \\
	&-C (I_n +\Sigma R )^{-1}(-\Sigma \lambda_2 +\varphi )-\bar A  \alpha -\bar B \gamma -\bar{C}\beta.
\end{align*}

Therefore, we can define \(\Sigma_1(\cdot)\) and \(\phi_1(\cdot)\) as follows:
\begin{align*}
	\Sigma_1=&-\Big(\Sigma A^\top+A\Sigma + \Sigma  Q \Sigma -BN^{-1}B^\top-\Phi C ^\top+\Phi R (I_n +\Sigma R )^{-1}(\Phi+\Sigma C^\top)\\
	&-C(I_n+\Sigma R)^{-1}(\Phi+\Sigma C^\top)\Big)
\end{align*}
and
\begin{align*}
	\phi_1=&- \Big((\Sigma Q+A) \phi  +(\Phi R -C)(I_n +\Sigma R)^{-1} \varphi -\Sigma\lambda_1 + B N ^{-1}\lambda_3 \\
	&+\big(C (I_n+\Sigma R )^{-1}\Sigma -\Phi R (I +\Sigma R)^{-1} ( \Sigma +I_n) \big)\lambda_2  -\bar A  \alpha -\bar B \gamma -\bar{C}\beta\Big).
\end{align*}

Moreover, 
\begin{align*}
	&- \Phi C ^\top +\Phi R (I_n +\Sigma R )^{-1}\Sigma C^\top  \\
	=& - \Phi (I _n+R \Sigma )^{-1} (I_n +R \Sigma  ) C ^\top   + \Phi  (I_n +R \Sigma )^{-1} R \Sigma C^\top \\
	=& - \Phi (I _n+R \Sigma )^{-1}  C ^\top,
\end{align*} 
Therefore,  
\begin{align*}
	\Sigma_1=&-\Big(\Sigma A^\top+A\Sigma + \Sigma  Q \Sigma -BN^{-1}B^\top+\Phi R (I_n+\Sigma R )^{-1}\Phi - \Phi (I _n+R \Sigma )^{-1}  C ^\top\\
	&-C(I_n+\Sigma R)^{-1}(\Phi+\Sigma C^\top)\Big).
\end{align*}
From the above, we can deduce the following BSDEs for \(\Sigma(\cdot)\) and \(\phi(\cdot)\): for \( s \in [0,T] \),
\begin{equation}\label{eq:Sigma}
	\left\{
	\begin{aligned}
		d\Sigma(s)=&\Big\{\Sigma A^\top+A\Sigma + \Sigma  Q \Sigma -BN^{-1}B^\top+\Phi R (I_n +\Sigma R )^{-1}\Phi - \Phi (I _n+R \Sigma )^{-1}  C ^\top\\
		&-C(I_n+\Sigma R)^{-1}(\Phi+\Sigma C^\top)\Big\}ds-\Phi(s)dW(s),\\
		\Sigma(T)=&0,
	\end{aligned}
	\right.
\end{equation}
and
\begin{equation}\label{eq:phi}
	\left\{
	\begin{aligned}
		d\phi(s)=&\Big\{(\Sigma Q+A) \phi  +(\Phi R -C)(I_n +\Sigma R)^{-1} \varphi -\Sigma \lambda_1 + B N ^{-1}\lambda_3 +\big(C (I_n+\Sigma R )^{-1}\Sigma\\
		& -\Phi R (I_n+\Sigma R)^{-1} ( \Sigma +I_n) \big)\lambda_2  -\bar A  \alpha -\bar B \gamma -\bar{C}\beta\}ds-\varphi(s)dW(s),\\
		\phi(T)=&-\xi.
	\end{aligned}
	\right.
\end{equation}

We now discuss the existence and uniqueness of the solution to (\ref{eq:Sigma}) and (\ref{eq:phi}), which is established by the following lemma.

\begin{lemma}\label{lem:sigmaphi}
	Suppose (H1) and (H2) hold. Then the stochastic Riccati equation (\ref{eq:Sigma}) admits a unique solution \(\big(\Sigma(\cdot),\Phi(\cdot)\big)\in L^{\infty,c}_\FF(\mathbb{S}^n)\times L^2_\FF(\mathbb{S}^n)\). 
	Moreover, the linear BSDE (\ref{eq:phi}) admits a unique solution \(\big(\phi(\cdot),\varphi(\cdot)\big)\in L^{2,c}_\FF(\RR^n)\times L^{\frac{1}{2}, 2}_\FF(\RR^n)\).
\end{lemma}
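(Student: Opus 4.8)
The plan is to treat the backward stochastic Riccati equation (\ref{eq:Sigma}) first and then, with $(\Sigma(\cdot),\Phi(\cdot))$ in hand, the linear BSDE (\ref{eq:phi}). Equation (\ref{eq:Sigma}) is precisely the stochastic Riccati equation produced by decoupling the optimality system (\ref{eq1}) of the homogeneous constrained BSDE LQ problem, so I would establish its well-posedness following the approach of Sun and Wang \cite{sun2021linear} for BSDE-LQ problems with random coefficients.

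Uniqueness for (\ref{eq:Sigma}) is the routine half: given two solutions $(\Sigma^i,\Phi^i)\in L^{\infty,c}_\FF(\mathbb{S}^n)\times L^2_\FF(\mathbb{S}^n)$, evaluating the generator along them turns the equation for the difference $(\Sigma^1-\Sigma^2,\Phi^1-\Phi^2)$ into a linear BSDE with bounded coefficients, and a standard energy estimate together with Gronwall's inequality forces the difference to vanish. For existence the substantive steps are: (i) the uniform a priori estimate $0\le\Sigma(s)\le KI_n$, which is where (H2) enters — the bounds $N(s),R(s)\ge\delta I$ provide the uniform convexity underlying this estimate and, since $\Sigma\ge0$ makes $R^{1/2}\Sigma R^{1/2}\ge0$, give the invertibility of $I_n+\Sigma R=R^{-1/2}(I_n+R^{1/2}\Sigma R^{1/2})R^{1/2}$ with $\|(I_n+\Sigma R)^{-1}\|\le\delta^{-1/2}\|R\|_{L^\infty_\FF}^{1/2}$, so that every coefficient in (\ref{eq:Sigma}) (and later in (\ref{eq:phi})) is bounded; (ii) truncating the quadratic-in-$\Sigma$ terms so the truncated Riccati equation has a generator Lipschitz in $\Sigma$ (it is still quadratic in the martingale integrand $\Phi$, which is dealt with as a quadratic BSDE using the positive semidefinite structure); (iii) invoking the bound from (i) to see the truncation is inactive and passing to the limit, which delivers $\Sigma\in L^{\infty,c}_\FF(\mathbb{S}^n_+)$ and $\Phi\in L^2_\FF(\mathbb{S}^n)$.

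With $(\Sigma,\Phi)$ fixed, (\ref{eq:phi}) has the form $d\phi=\{\mathbf{A}(s)\phi+\mathbf{B}(s)\varphi+\mathbf{f}(s)\}\,ds-\varphi\,dW(s)$ with $\phi(T)=-\xi$, where $\mathbf{A}=\Sigma Q+A\in L^\infty_\FF$, $\mathbf{B}=(\Phi R-C)(I_n+\Sigma R)^{-1}$, and $\mathbf{f}$ gathers the remaining terms. Two non-standard features appear: $\mathbf{B}\in L^2_\FF$ but not $L^\infty_\FF$, so the driver depends only square-integrably on the martingale part; and, because $\lambda_1,\lambda_2,\lambda_3,\alpha,\beta,\gamma\in\mathbb{L}^2$ are merely square-integrable in time while the coefficient of $\lambda_2$ contains $\Phi$, the source $\mathbf{f}$ is not in $L^2_\FF$ but only satisfies $\mathbb{E}\big(\int_0^T|\mathbf{f}(s)|\,ds\big)^2<\infty$ (using $\|\Phi\lambda_2\|_{L^1(0,T)}\le\|\lambda_2\|_{\mathbb{L}^2}(\int_0^T|\Phi|^2)^{1/2}\in L^2(\Omega)$). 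Following the weighted a priori estimates for such linear BSDEs in \cite{sun2021linear}, one gets a unique solution with $\phi\in L^{2,c}_\FF(\RR^n)$ — the $L^2$ terminal datum and the source together control $\mathbb{E}\sup_s|\phi(s)|^2$ — and, since $\mathbf{B}$ is only square-integrable, $\varphi\in L^{\frac12,2}_\FF(\RR^n)$, the natural estimate bounding $\mathbb{E}(\int_0^T|\varphi|^2\,ds)^{1/2}$ rather than $\mathbb{E}\int_0^T|\varphi|^2\,ds$.

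The main obstacle is the existence part of the Riccati equation, i.e. steps (ii)–(iii): (\ref{eq:Sigma}) is a genuinely quadratic backward stochastic Riccati equation (the term $\Phi R(I_n+\Sigma R)^{-1}\Phi$ is quadratic in the unbounded integrand $\Phi$), and the approximation must be run while keeping $\Sigma$ symmetric, positive semidefinite and uniformly bounded so the matrix inverses stay well defined throughout — the estimate $0\le\Sigma\le KI_n$, powered by the uniform convexity in (H2), is the crux; once it is secured, removing the truncation and solving the linear BSDE (\ref{eq:phi}) are routine.
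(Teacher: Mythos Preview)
Your plan is sound and, for the existence of $(\Sigma,\Phi)$ and for the well-posedness of $(\phi,\varphi)$, coincides with the paper's: both you and the paper outsource these to Sun and Wang \cite{sun2021linear} (their Theorems 5.3 and 5.5), with your discussion of the a~priori bound $0\le\Sigma\le KI_n$, the invertibility of $I_n+\Sigma R$, and the $L^2$-only coefficient $\mathbf{B}=(\Phi R-C)(I_n+\Sigma R)^{-1}$ being exactly what makes those citations applicable.

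The one genuine difference is in the \emph{uniqueness} of the Riccati solution. You propose to linearize the difference $(\Sigma^1-\Sigma^2,\Phi^1-\Phi^2)$ and apply an energy/Gronwall argument. The paper instead argues indirectly: any solution $(\Sigma,\Phi)$ of (\ref{eq:Sigma}), together with the corresponding $(\phi,\varphi)$ of (\ref{eq:phi}), produces via the ansatz $\tilde Y=-\Sigma\tilde X-\phi$ a solution of the coupled FBSDE (\ref{eq1}); since Lemma \ref{lem:XYZ1} already gives uniqueness for (\ref{eq1}) and $\phi$ is unique by the cited result, $\Sigma$ is forced, and then $\Phi$ is recovered by applying It\^o's formula to $(\Sigma-\bar\Sigma)^2$ once $\Sigma=\bar\Sigma$ is known. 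The paper's route is cleaner precisely because it avoids the quadratic-in-$\Phi$ term: your claim that the difference satisfies a linear BSDE ``with bounded coefficients'' is not accurate --- the coefficient in front of $\tilde\Phi$ arising from $\Phi R(I_n+\Sigma R)^{-1}\Phi$ involves $\Phi^1,\Phi^2\in L^2_\FF$, not $L^\infty_\FF$, so a plain Gronwall estimate is insufficient and you would need a BMO-martingale or quadratic-BSDE stability argument to close. This is not a fatal gap, but the paper's indirect argument through Lemma \ref{lem:XYZ1} sidesteps the issue entirely.
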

\begin{proof}
	The existence  of a solution to (\ref{eq:Sigma}), as well as the existence and uniqueness of a solution to (\ref{eq:phi}), are established by Sun and Wang in Theorem 5.3 and Theorem 5.5 respectively of \cite{sun2021linear}. Now, we present the proof of the uniqueness of the solution to equation (\ref{eq:Sigma}).
	
	Based on the Lemma \ref{lem:XYZ1}, the equation (\ref{yy}), and the uniqueness of \(\phi\) in (\ref{eq:phi}), we can derive the uniqueness of \(\Sigma\) in equation (\ref{eq:Sigma})  immediately.
	
	On the other hand, we define \(\tilde{\Sigma} = \Sigma - \bar{\Sigma}\) and \(\tilde{\Phi} = \Phi - \bar{\Phi}\), where \((\Sigma, \Phi)\) and \((\bar{\Sigma}, \bar{\Phi})\) are the solutions to equation (\ref{eq:Sigma}). By applying Itô’s formula to \(\tilde{\Sigma}^2\) and using the fact that \(\tilde{\Sigma}(s) = 0\) for all \(s \in [0, T]\), we obtain that \(\mathbb{E}\left[\int_0^T |\tilde{\Phi}(s)|^2 \, ds \right]=0\), which implies that \(\tilde{\Phi} = 0\) a.e. for all  \(s\in[0, T]\).
	
	Therefore, the proof is complete. 
\end{proof}

In the following, we fixed $(\Sigma(\cdot), \Phi(\cdot))$.
Based on above lemma, we can directly deduce the following conclusion, which provides a expression for the optimal control \(\tilde{u}_{\bm{\eta}}^{\bm{\lambda}}(\cdot)\).
\begin{lemma}\label{lem:deu}
	Let (H1) and (H2) hold. The control \(\tilde{u}_{\bm{\eta}}^{\bm{\lambda}}(\cdot)\) takes the following  form:\ for \(s \in [0,T]\),
	\begin{align}\label{eq:deu}
		\tilde u_{\bm{\eta}}^{\bm{\lambda}}(s)=N(s)^{-1}\{B(s)^\top \tilde X(s)-\lambda_3(s)\},
	\end{align}
	where  \(\tilde X(\cdot)\) is the unique solution of the following SDE: for \(\forall s\in[0,T]\),
	\begin{equation}\label{eq:tildeY} 
		\left\{
		\begin{aligned}
			d \tilde X(s)= &\; -\Big\{(A^\top+Q\Sigma) \tilde X+Q\phi- \lambda_1 \Big\} d s \\
			&-\Big\{\big(C^\top-R \big(I_n+\Sigma R\big)^{-1}\big(\Phi+\Sigma C^\top\big)\big)\tilde X\\
			&- R \big(I_n+\Sigma R\big)^{-1}\big(-\Sigma \lambda_2+\varphi\big)-\lambda_2  \Big\} d W(s),  \\
			\tilde X(0)= & \; - \big(I_n+G\Sigma (0) \big)^{-1}G \phi (0).
		\end{aligned}
		\right. \\
	\end{equation}
	where \(\big(\Sigma(\cdot), \Phi(\cdot)\big)\) is a solution of (\ref{eq:Sigma}), and \(\big(\phi(\cdot), \varphi(\cdot)\big)\) is the unique solution of (\ref{eq:phi}).
\end{lemma}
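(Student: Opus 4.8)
\textbf{Proof plan for Lemma \ref{lem:deu}.}
The strategy is to combine the decoupling relation (\ref{yy}) with the already-established well-posedness of the Riccati equation (\ref{eq:Sigma}) and the linear BSDE (\ref{eq:phi}) from Lemma \ref{lem:sigmaphi}, and then read off the form of $\tilde u_{\bm\eta}^{\bm\lambda}$ from the stationary condition (\ref{stationu2}). First I would recall that by Lemma \ref{lem:p11} (and the remark following it) the optimal control already has the representation $\tilde u_{\bm\eta}^{\bm\lambda}(s)=N(s)^{-1}\{B(s)^\top\tilde X(s)-\lambda_3(s)\}$, so (\ref{eq:deu}) is immediate \emph{once} we identify the process $\tilde X(\cdot)$ as the solution of the SDE (\ref{eq:tildeY}). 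Hence the real content of the lemma is: the $\tilde X$ appearing in the coupled FBSDE (\ref{eq1}) satisfies the \emph{decoupled} equation (\ref{eq:tildeY}), and this equation is uniquely solvable.

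The main computation is to substitute $\tilde Y=-\Sigma\tilde X-\phi$ and the expression (\ref{Z}) for $\tilde Z$ into the backward component of (\ref{eq1}) (equivalently, into the $\tilde X$-dynamics of (\ref{eq:SMP}) after eliminating $\tilde u_{\bm\eta}^{\bm\lambda}$). For the drift: the $\tilde X$-dynamics has drift $-\{A^\top\tilde X-Q\tilde Y-\lambda_1\}$, and replacing $\tilde Y$ gives $-\{A^\top\tilde X+Q\Sigma\tilde X+Q\phi-\lambda_1\}=-\{(A^\top+Q\Sigma)\tilde X+Q\phi-\lambda_1\}$, which is exactly the drift in (\ref{eq:tildeY}). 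For the diffusion: the $\tilde X$-dynamics has diffusion $-\{C^\top\tilde X-R\tilde Z-\lambda_2\}$, and inserting $\tilde Z=(I_n+\Sigma R)^{-1}\{(\Phi+\Sigma C^\top)\tilde X-\Sigma\lambda_2+\varphi\}$ from (\ref{Z}) produces the diffusion coefficient $-\{(C^\top-R(I_n+\Sigma R)^{-1}(\Phi+\Sigma C^\top))\tilde X-R(I_n+\Sigma R)^{-1}(-\Sigma\lambda_2+\varphi)-\lambda_2\}$, matching (\ref{eq:tildeY}). For the initial condition: from (\ref{yy}) at $s=0$ together with $\tilde X(0)=G\tilde Y(0)$ we get $\tilde X(0)=G(-\Sigma(0)\tilde X(0)-\phi(0))$, i.e. $(I_n+G\Sigma(0))\tilde X(0)=-G\phi(0)$; since $G\ge 0$ and $\Sigma(0)\in\mathbb{S}^n_+$ by Lemma \ref{lem:sigmaphi} (so $G\Sigma(0)$ has nonnegative spectrum and $I_n+G\Sigma(0)$ is invertible), we obtain $\tilde X(0)=-(I_n+G\Sigma(0))^{-1}G\phi(0)$.

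It then remains to check that (\ref{eq:tildeY}) is well-posed. Its initial condition lies in $L^2_{\cF_0}(\RR^n)$ because $\phi(0)\in L^2_{\cF_0}$ and $(I_n+G\Sigma(0))^{-1}G$ is bounded (using $\|(I_n+G\Sigma(0))^{-1}\|\le 1$ from positive semidefiniteness); the drift coefficient $A^\top+Q\Sigma$ and the diffusion coefficient $C^\top-R(I_n+\Sigma R)^{-1}(\Phi+\Sigma C^\top)$ together with the inhomogeneous terms $Q\phi-\lambda_1$ and $R(I_n+\Sigma R)^{-1}(-\Sigma\lambda_2+\varphi)+\lambda_2$ all have the requisite integrability: $A,C\in L^\infty_\FF$, $Q,R\in L^\infty_\FF(\mathbb{S}^n_+)$, $\Sigma\in L^{\infty,c}_\FF(\mathbb{S}^n)$, $\lambda_i\in\LL^2$, and $\Phi\in L^2_\FF(\mathbb{S}^n)$, $\varphi\in L^{1/2,2}_\FF(\RR^n)$ by Lemma \ref{lem:sigmaphi}; the factor $(I_n+\Sigma R)^{-1}$ is bounded since $\Sigma R$ has nonnegative spectrum (as $\Sigma\ge0$, $R\ge0$). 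Thus standard SDE theory (e.g. as used for Theorem \ref{thm32}) gives a unique solution $\tilde X(\cdot)\in L^{2,c}_\FF(\RR^n)$. Finally, consistency with Lemma \ref{lem:XYZ1}: recovering $\tilde Y,\tilde Z$ from this $\tilde X$ via (\ref{yy}) and (\ref{Z}) and verifying they solve (\ref{eq1}) is exactly the computation above run in reverse, which also confirms $\tilde X$ coincides with the backward component produced in Lemma \ref{lem:XYZ1}.

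\medskip
\noindent\emph{Expected main obstacle.} The routine part is the substitution bookkeeping; the only genuinely delicate point is the invertibility of the matrices $I_n+\Sigma R$ (pointwise in $(s,\omega)$) and $I_n+G\Sigma(0)$ with \emph{uniformly bounded} inverses, together with the measurability and integrability of $(I_n+\Sigma R)^{-1}(\Phi+\Sigma C^\top)$ and $(I_n+\Sigma R)^{-1}\varphi$ — the latter requires care because $\varphi$ is only in $L^{1/2,2}_\FF$ rather than $L^2_\FF$, so one must track that multiplying by the bounded process $(I_n+\Sigma R)^{-1}$ preserves that (weaker) integrability class and that this still suffices for the SDE (\ref{eq:tildeY}) to have a unique solution in $L^{2,c}_\FF(\RR^n)$.
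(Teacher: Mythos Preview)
Your proposal is correct and follows exactly the approach the paper has in mind: the paper does not give a separate proof of this lemma but simply states that it ``can be directly deduced'' from the decoupling computation leading to (\ref{eq:Sigma})--(\ref{eq:phi}) together with the well-posedness Lemma \ref{lem:sigmaphi}, and your plan is precisely a careful unpacking of that deduction (substitute (\ref{yy}) and (\ref{Z}) into the $\tilde X$-dynamics, read off the initial condition from $\tilde X(0)=G\tilde Y(0)$, and invoke (\ref{stationu2}) for the control). Your flagged obstacle about the boundedness of $(I_n+\Sigma R)^{-1}$ and the weaker integrability of $\varphi$ is a genuine technical point the paper glosses over, but it does not change the argument's structure.
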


The following lemma provides the expression for the optimal control of Problem (a).

\begin{theorem}\label{th:uast}	
	Suppose that the Assumptions (H1) and (H2) hold, the optimal control \(u^\ast_{\bm{\eta}}(\cdot)\) can be expressed as the following  form:
	\begin{equation}\label{eq:uast}
		\begin{aligned}
			u^\ast_{\bm{\eta}}(s)=&N(s)^{-1}\{B(s)^\top  X^*(s)-\lambda^*_3(s)\},
		\end{aligned}
	\end{equation}
	where  \( X^*(\cdot)\) is the unique solution of the following SDE: for \(\forall s\in[0,T]\),
	\begin{equation}
		\left\{
		\begin{aligned}
			d  X^*(s)= &\; -\Big\{(A^\top+Q\Sigma) X^*+Q\phi^*- \lambda_1^* \Big\} d s \\
			&-\Big\{\big(C^\top-R \big(I_n+\Sigma R\big)^{-1}\big(\Phi+\Sigma C^\top\big)\big) X^*\\
			&- R \big(I_n+\Sigma R\big)^{-1}\big(-\Sigma \lambda^*_2+\varphi^*\big)-\lambda^*_2  \Big\} d W(s),  \\
			X^*(0)= & \; - \big(I_n+G\Sigma (0) \big)^{-1}G \phi ^*(0).
		\end{aligned}
		\right. \\
	\end{equation}
	where  \(\big(\phi^*(\cdot), \varphi^*(\cdot)\big)\) is the unique solution of (\ref{eq:phi}), obtained by replacing \(\lambda_2(\cdot)\) and \(\lambda_3(\cdot)\)  by \(\lambda^*_2(\cdot)\) and \(\lambda^*_3(\cdot)\) respectively,  with  \(\lambda^*_2(\cdot)\) and \(\lambda^*_3(\cdot)\) being an arbitrary solution of (\ref{eq:operators}). 
\end{theorem}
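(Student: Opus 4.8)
The plan is to assemble the claimed expression by combining three ingredients already available in the excerpt: the characterization of $u^\ast_{\bm\eta}(\cdot)$ via Lemmas \ref{lem:uabg}, \ref{lem:p11} and \ref{lem:uu}; the decoupling field $(\Sigma,\Phi)$ and the auxiliary BSDE $(\phi,\varphi)$ from Lemma \ref{lem:sigmaphi}; and the explicit formula for $\tilde u^{\bm\lambda}_{\bm\eta}(\cdot)$ from Lemma \ref{lem:deu}. First I would fix any solution $\bm\lambda(\cdot)\in(\mathbb L^2)^3$ of the system (\ref{eq:operators}); such a $\bm\lambda$ exists because (\ref{eq:operators}) is the condition defining $\bm\lambda^{\bm\eta}$, and whatever solution we pick, Lemma \ref{lem:uu} guarantees that the corresponding control $\tilde u^{\bm\lambda}_{\bm\eta}(\cdot)$ coincides with the genuine optimal control $u^\ast_{\bm\eta}(\cdot)$ of Problem (a) (whose existence and uniqueness is Lemma \ref{lem:uabg}). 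Renaming $\lambda_2,\lambda_3$ as $\lambda^\ast_2,\lambda^\ast_3$ for this chosen solution, the desired identity (\ref{eq:uast}) is then literally the statement of Lemma \ref{lem:deu} with $\tilde X$ relabeled $X^\ast$, provided we check that the SDE for $X^\ast$ in the theorem is exactly the SDE (\ref{eq:tildeY}) for $\tilde X$ after the same relabeling — which it is, since $(\phi^\ast,\varphi^\ast)$ is by definition the solution of (\ref{eq:phi}) driven by $\lambda^\ast_2,\lambda^\ast_3$ (and $\lambda^\ast_1$).

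Concretely the steps are: (i) invoke Lemma \ref{lem:uabg} to get the unique optimal $u^\ast_{\bm\eta}(\cdot)$; (ii) pick $\bm\lambda^\ast(\cdot)$ solving (\ref{eq:operators}) and apply Lemma \ref{lem:uu} to identify $u^\ast_{\bm\eta}(\cdot)=\tilde u^{\bm\lambda^\ast}_{\bm\eta}(\cdot)$; (iii) apply Lemma \ref{lem:sigmaphi} to obtain the unique $(\phi^\ast,\varphi^\ast)$ associated to $\bm\lambda^\ast$, together with the fixed $(\Sigma,\Phi)$; (iv) invoke Lemma \ref{lem:deu} to write $\tilde u^{\bm\lambda^\ast}_{\bm\eta}(s)=N(s)^{-1}\{B(s)^\top\tilde X(s)-\lambda^\ast_3(s)\}$ with $\tilde X$ solving (\ref{eq:tildeY}); (v) observe that the SDE (\ref{eq:tildeY}), read with data $(\phi^\ast,\varphi^\ast,\lambda^\ast_1,\lambda^\ast_2)$, is precisely the $X^\ast$-equation in the statement, and its well-posedness (unique solution in $L^{2,c}_\FF(\RR^n)$) follows from Theorem \ref{thm32}, since the coefficients are bounded by (H1), (H2) and Lemma \ref{lem:sigmaphi} (note $\Sigma\in L^{\infty,c}_\FF(\mathbb S^n)$ and $(I_n+G\Sigma(0))^{-1}$ makes sense because $G,\Sigma(0)\geq 0$). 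Chaining (i)–(v) gives (\ref{eq:uast}).

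One point that needs a brief but genuine argument is \emph{well-definedness independent of the choice of $\bm\lambda^\ast$}: the system (\ref{eq:operators}) is not yet known to have a unique solution, so different admissible $\bm\lambda^\ast$ could a priori yield different $(\phi^\ast,\varphi^\ast)$ and hence different right-hand sides in (\ref{eq:uast}). However, this is harmless: by Lemma \ref{lem:uu} every such choice produces the \emph{same} control $u^\ast_{\bm\eta}(\cdot)$, which is unique by Lemma \ref{lem:uabg}; the theorem is therefore to be read as asserting that $u^\ast_{\bm\eta}(\cdot)$ admits \emph{a} representation of the stated form for \emph{any} solution $\bm\lambda^\ast$ of (\ref{eq:operators}), not that the formula's constituents are themselves unique. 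I would state this explicitly in a sentence so the logical status is unambiguous.

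The main obstacle, such as it is, is bookkeeping rather than mathematics: one must verify carefully that substituting the invariant-embedding relation $\tilde Y=-\Sigma\tilde X-\phi$ into the coupled system (\ref{eq1}) and then into the stationarity condition (\ref{stationu2}) reproduces exactly the decoupled $X^\ast$-equation and the feedback form (\ref{eq:uast}) — in particular that the terminal/initial condition $\tilde X(0)=G\tilde Y(0)=G(-\Sigma(0)\tilde X(0)-\phi(0))$ rearranges to $X^\ast(0)=-(I_n+G\Sigma(0))^{-1}G\phi^\ast(0)$, which requires invertibility of $I_n+G\Sigma(0)$. That invertibility is immediate from $G\geq 0$ and $\Sigma(0)\geq 0$ (so $G\Sigma(0)$ has nonnegative-real spectrum in a suitable sense; more cleanly, $I_n+G\Sigma(0)$ is invertible because $\det(I_n+G\Sigma(0))=\det(I_n+G^{1/2}\Sigma(0)G^{1/2})\geq 1>0$). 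All of this is already carried out in the derivation preceding Lemma \ref{lem:deu}, so for the theorem itself I would simply cite Lemma \ref{lem:deu} and the identification from Lemma \ref{lem:uu}, keeping the proof short.
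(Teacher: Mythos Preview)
Your proposal is correct and mirrors the paper's approach: the paper does not give a separate proof of Theorem \ref{th:uast} but presents it as an immediate consequence of Lemma \ref{lem:deu} together with the identification $u^\ast_{\bm\eta}=\tilde u^{\bm\lambda}_{\bm\eta}$ from Lemma \ref{lem:uu} (for any $\bm\lambda$ solving (\ref{eq:operators})), which is exactly the chain (i)--(v) you spell out. Your extra remarks on the non-uniqueness of $\bm\lambda^\ast$ and the invertibility of $I_n+G\Sigma(0)$ are apt clarifications that the paper leaves implicit.
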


\section{The proof to Theorem \ref{main}}\label{sec6}

In this section, we give the proof of Theorem \ref{main}. We first prove the Lemma \ref{lem:conditions}.

\noindent\textit{Proof of Lemma \ref{lem:conditions}: }
Recalling equations (\ref{eq:uastoperator}), (\ref{eq:YZoperator}) and (\ref{eq:YZoperator1}), we can rewrite the cost functional \(\hat{J}_{\bm{\eta}}( u(\cdot))\) as
\begin{equation*}
	\begin{aligned}
		\hat{J}_{\bm{\eta}}(t,\xi;u^\ast_{\alpha,\beta,\gamma}(\cdot))&=\mathbb{E} \Big\{ \langle G Y(0), Y(0) \rangle 
		+ \int_0^T \Big[ \langle Q(s) Y(s), Y(s) \rangle + \langle \bar{Q}(s) \alpha(s), \alpha(s) \rangle \\
		&\quad\quad\quad+ \langle R(s) Z(s), Z(s) \rangle + \langle \bar{R}(s) \beta(s), \beta(s) \rangle + \langle N(s) u(s), u(s) \rangle\\
		&  \quad\quad\quad+\langle \bar{N}(s) \gamma(s), \gamma(s) \rangle \Big] ds \Big\}\\
		&= \langle(\cO_{31}^\ast G\cO_{31}+\cO_{01}^\ast Q\cO_{01}+\cO_{11}^\ast R\cO_{11}+\cO_{21}^\ast N\cO_{21})\xi,\xi\rangle_{\LL^2}\\
		&\quad+\langle (\cO_{32}^\ast G\cO_{32}+\cO_{02}^\ast Q\cO_{02}+\bar{Q}+\cO_{12}^\ast R\cO_{12}+\cO_{22}^\ast N\cO_{22})\alpha,\alpha\rangle_{\LL^2}\\
		&\quad+\langle (\cO_{33}^\ast G\cO_{33}+\cO_{03}^\ast Q\cO_{03}+\cO_{13}^\ast R\cO_{13}+\cO_{23}^\ast N\cO_{23}+\bar{R})\beta,\beta\rangle_{\LL^2}\\
		&\quad+\langle (\cO_{34}^\ast G\cO_{34}+\cO_{04}^\ast Q\cO_{04}+\cO_{14}^\ast R\cO_{14}+\cO_{24}^\ast N\cO_{24}+\bar{N})\gamma,\gamma\rangle_{\LL^2}\\
		&\quad+2\langle (\cO_{32}^\ast G\cO_{31}+\cO_{02}^\ast Q\cO_{01}+\cO_{12}^\ast R\cO_{11}+\cO_{22}^\ast N\cO_{21})\xi,\alpha\rangle_{\LL^2}\\
		&\quad+2\langle (\cO_{33}^\ast G\cO_{31}+\cO_{03}^\ast Q\cO_{01}+\cO_{13}^\ast R\cO_{11}+\cO_{23}^\ast N\cO_{21})\xi,\beta\rangle_{\LL^2}\\
		&\quad+2\langle (\cO_{34}^\ast G\cO_{31}+\cO_{04}^\ast Q\cO_{01}+\cO_{14}^\ast R\cO_{11}+\cO_{24}^\ast N\cO_{21})\xi,\gamma\rangle_{\LL^2}\\
		&\quad+2\langle (\cO_{33}^\ast G\cO_{32}+\cO_{03}^\ast Q\cO_{02}+\cO_{13}^\ast R\cO_{12}+\cO_{23}^\ast N\cO_{22})\alpha,\beta\rangle_{\LL^2}\\
		&\quad+2\langle (\cO_{34}^\ast G\cO_{32}+\cO_{04}^\ast Q\cO_{02}+\cO_{14}^\ast R\cO_{12}+\cO_{24}^\ast N\cO_{22})\alpha,\gamma\rangle_{\LL^2}\\
		&\quad+2\langle (\cO_{34}^\ast G\cO_{33}+\cO_{04}^\ast Q\cO_{03}+\cO_{14}^\ast R\cO_{13}+\cO_{24}^\ast N\cO_{23})\beta,\gamma\rangle_{\LL^2}.
	\end{aligned}
\end{equation*}

Therefore, by the first-order conditions, we  obtain that \(\alpha^\ast(\cdot)\), \(\beta^\ast(\cdot)\), and \(\gamma^\ast(\cdot)\) are the optimal solutions for Problem (b) if and only if 
\begin{equation*}
	\left\{
	\begin{aligned}
		&(\cO_{32}^\ast G\cO_{31}+\cO_{02}^\ast Q\cO_{01}+\cO_{12}^\ast R\cO_{11}+\cO_{22}^\ast N\cO_{21})\xi+(\cO_{32}^\ast G\cO_{32}+\cO_{02}^\ast Q\cO_{02}+\bar{Q}\\
		&\quad+\cO_{12}^\ast R\cO_{12}+\cO_{22}^\ast N\cO_{22})\alpha^*+ (\cO_{32}^\ast G\cO_{33}+\cO_{02}^\ast Q\cO_{03}+\cO_{12}^\ast R\cO_{13}+\cO_{22}^\ast N\cO_{23})\beta^*\\
		&\quad+(\cO_{32}^\ast G\cO_{34}+\cO_{02}^\ast Q\cO_{04}+\cO_{12}^\ast R\cO_{14}+\cO_{22}^\ast N\cO_{24})\gamma^*
		=0,\\
		&(\cO_{33}^\ast G\cO_{33}+\cO_{03}^\ast Q\cO_{03}+\cO_{13}^\ast R\cO_{13}+\cO_{23}^\ast N\cO_{23}+\bar{R})\beta^*+
		(\cO_{33}^\ast G\cO_{31}+\cO_{03}^\ast Q\cO_{01}\\
		&\quad+\cO_{13}^\ast R\cO_{11}+\cO_{23}^\ast N\cO_{21})\xi+(\cO_{33}^\ast G\cO_{32}+\cO_{03}^\ast Q\cO_{02}+\cO_{13}^\ast R\cO_{12}+\cO_{23}^\ast N\cO_{22})\alpha^*\\
		&\quad	+(\cO_{33}^\ast G\cO_{34}+\cO_{03}^\ast Q\cO_{04}+\cO_{13}^\ast R\cO_{14}+\cO_{23}^\ast N\cO_{24})\gamma^*=0,\\
		&(\cO_{34}^\ast G\cO_{34}+\cO_{04}^\ast Q\cO_{04}+\cO_{14}^\ast R\cO_{14}+\cO_{24}^\ast N\cO_{24}+\bar{N})\gamma^*
		+(\cO_{34}^\ast G\cO_{31}+\cO_{04}^\ast Q\cO_{01}\\
		&\quad+\cO_{14}^\ast R\cO_{11}+\cO_{24}^\ast N\cO_{21})\xi+(\cO_{34}^\ast G\cO_{32}+\cO_{04}^\ast Q\cO_{02}+\cO_{14}^\ast R\cO_{12}+\cO_{24}^\ast N\cO_{22})\alpha^*\\
		&\quad+ (\cO_{34}^\ast G\cO_{33}+\cO_{04}^\ast Q\cO_{03}+\cO_{14}^\ast R\cO_{13}+\cO_{24}^\ast N\cO_{23})\beta^*=0.
	\end{aligned}
	\right.
\end{equation*}
Furthermore, the above equation can be written in the form of the following matrix product
\begin{align*}
	\mathbf{I}_0\big(\mathcal{O}^\top\mathcal{T}\mathcal{O}+\mathcal{W}\big)(\xi,\bm{\eta}^\ast)^\top=(0,0,0,0)^\top,
\end{align*}
where \(\mathbf{I}_0= 
\begin{pmatrix}
	0&0&0&0\\
	0&1&0&0\\
	0&0&1&0\\
	0&0&0&1
\end{pmatrix}\), \(\mathcal{O}=
\begin{pmatrix}
	\mathcal{O}_{01}&\mathcal{O}_{02}&\mathcal{O}_{03}&\mathcal{O}_{04}\\
	\mathcal{O}_{11}&\mathcal{O}_{12}&\mathcal{O}_{13}&\mathcal{O}_{14}\\
	\mathcal{O}_{21}&\mathcal{O}_{22}&\mathcal{O}_{23}&\mathcal{O}_{24}\\
	\mathcal{O}_{31}&\mathcal{O}_{32}&\mathcal{O}_{33}&\mathcal{O}_{34}
\end{pmatrix}
\),
\(\mathcal{T}=
\begin{pmatrix}
	Q&0&0&0\\
	0&R&0&0\\
	0&0&N&0\\
	0&0&0&G
\end{pmatrix}\), and \(\mathcal{W}=
\begin{pmatrix}
	0&0&0&0\\
	0&\bar{Q}&0&0\\
	0&0&\bar{R}&0\\
	0&0&0&\bar{N}
\end{pmatrix}\).

Thus, we have now proved this lemma.	
\qed

Finally, we will present the proof of the main theorem in this paper.

\noindent\textit{Proof  of Theorem \ref{main}:}
It is evident that
\[
\inf_{u \in \mathcal{U}} J(u(\cdot)) = \inf_{\alpha(\cdot), \beta(\cdot), \gamma(\cdot) \in \mathbb{L}^2} \inf_{u(\cdot) \in \mathcal{U}} \left\{ J(u) : \mathbb{E}[Y^{u}(\cdot)] = \alpha(\cdot), \mathbb{E}[Z^u(\cdot)] = \beta(\cdot), \text{ and } \mathbb{E}[u(\cdot)] = \gamma(\cdot) \right\}.
\]
Therefore, it is natural to decompose Problem (MF-BSLQ) into Problem (a) and Problem (b).  Applying Lemmas \ref{lem:p11}, \ref{lem:XYZ1}, \ref{lem:uu} and \ref{lem:conditions}, then the Theorem \ref{main} can be easily deduced.
\qed

\end{document}